\documentclass[11pt]{article}

\usepackage{amsmath,amssymb,amsfonts}
\usepackage{color,xcolor}
\usepackage{mathrsfs}
\usepackage{algorithm,algorithmic,float}
\usepackage{graphicx,subfigure,epsfig,psfrag,fancyhdr,enumerate}
\usepackage[linktocpage,colorlinks,linkcolor=blue,anchorcolor=blue, citecolor=blue,urlcolor=blue]{hyperref}

\topmargin      0.0truein
\headheight     0.0truein
\headsep        0.0truein
\textheight     9.0truein
\textwidth      6.5truein
\oddsidemargin  0.0truein
\evensidemargin 0.0truein

\newtheorem{theorem}{Theorem}[section]
\newtheorem{definition}[theorem]{Definition}
\newtheorem{lemma}[theorem]{Lemma}
\newtheorem{example}[theorem]{Example}
\newtheorem{corollary}[theorem]{Corollary}
\newtheorem{proposition}[theorem]{Proposition}

\newcommand{\R}{\mathbb{R}}
\newcommand{\RS}{\mathbb{R}_{\textnormal{s}}}
\newcommand{\C}{\mathbb{C}}
\newcommand{\CS}{\mathbb{C}_{\textnormal{s}}}
\newcommand{\PS}{\mathbb{C}_{\textnormal{ps}}}
\newcommand{\CPS}{\mathbb{C}_{\textnormal{cps}}}
\newcommand{\I}{\mathbb{I}}

\newcommand{\E}{\mathcal{E}}

\newcommand{\U}{\mathcal{U}}
\newcommand{\V}{\mathcal{V}}
\newcommand{\W}{\mathcal{W}}
\newcommand{\X}{\mathcal{X}}
\newcommand{\Y}{\mathcal{Y}}
\newcommand{\Z}{\mathcal{Z}}
\newcommand{\HI}{\mathcal{H}}
\newcommand{\SI}{\mathcal{S}}
\newcommand{\TT}{\mathcal{T}}
\newcommand{\OO}{\mathcal{O}}
\newcommand{\ba}{\boldsymbol{a}}
\newcommand{\bb}{\boldsymbol{b}}
\newcommand{\bc}{\boldsymbol{c}}
\newcommand{\be}{\boldsymbol{e}}

\newcommand{\bp}{\boldsymbol{p}}
\newcommand{\bs}{\boldsymbol{s}}
\newcommand{\bx}{\boldsymbol{x}}
\newcommand{\by}{\boldsymbol{y}}
\newcommand{\bz}{\boldsymbol{z}}
\newcommand{\bu}{\boldsymbol{u}}
\newcommand{\bv}{\boldsymbol{v}}

\newcommand{\od}{\otimes d}
\newcommand{\ov}{\overline}

\newcommand{\ii}{\mbox{\bf i}}
\newcommand{\T}{\textnormal{T}}
\newcommand{\HH}{\textnormal{H}}
\newcommand{\ex}{\textnormal{E}}
\newcommand{\even}{\textnormal{even}}

\newcommand{\re}{\textnormal{Re}\,}
\newcommand{\im}{\textnormal{Im}\,}
\newcommand{\tr}{\textnormal{tr}\,}

\newcommand{\rank}{\textnormal{rank}\,}
\newcommand{\ps}{\textnormal{PS}}
\newcommand{\cps}{\textnormal{CPS}}

\DeclareMathOperator*{\argmax}{arg\,max}
\DeclareMathOperator*{\argmin}{arg\,min}

\begin{document}

\title{On decompositions and approximations of conjugate partial-symmetric complex tensors}

\author{
Taoran FU
\thanks{School of Mathematical Sciences, Shanghai Jiao Tong University, Shanghai 200240, China. Email: \mbox{taoran30@sjtu.edu.cn}.}
    \and
Bo JIANG
\thanks{Research Institute for Interdisciplinary Sciences, School of Information Management and Engineering, Shanghai University of Finance and Economics, Shanghai 200433, China. Email: \mbox{isyebojiang@gmail.com}.}
    \and
Zhening LI
\thanks{Department of Mathematics, University of Portsmouth, Portsmouth, Hampshire PO1 3HF, United Kingdom. Email: \mbox{zheningli@gmail.com}.}
}

\date{\today}

\maketitle

\begin{abstract}

  Conjugate partial-symmetric (CPS) tensors are the high-order generalization of Hermitian matrices. As the role played by Hermitian matrices in matrix theory and quadratic optimization, CPS tensors have shown growing interest recently in tensor theory and optimization, particularly in many application-driven complex polynomial optimization problems.
  In this paper, we study CPS tensors with a focus on ranks, rank-one decompositions and approximations, as well as their applications. The analysis is conducted along side with a more general class of complex tensors called partial-symmetric tensors. We prove constructively that any CPS tensor can be decomposed into a sum of rank-one CPS tensors, which provides an alternative definition of CPS tensors via linear combinations of rank-one CPS tensors. Three types of ranks for CPS tensors are defined and shown to be different in general. This leads to the invalidity of the conjugate version of Comon's conjecture. We then study rank-one approximations and matricizations of CPS tensors. By carefully unfolding CPS tensors to Hermitian matrices, rank-one equivalence can be preserved. This enables us to develop new convex optimization models and algorithms to compute best rank-one approximation of CPS tensors. Numerical experiments from various data are performed to justify the capability of our methods.

%

\vspace{0.25cm}

\noindent {\bf Keywords:} conjugate partial-symmetric tensor, partial-symmetric tensor, rank, rank-one decomposition, rank-one approximation, tensor eigenvalue

\vspace{0.25cm}

\noindent {\bf Mathematics Subject Classification:}
  15A69,  
  15B57,  
  15A18,  
  15A03   

\end{abstract}

\section{Introduction}\label{sec:intro}

As the complex counterpart of real symmetric matrices, Hermitian matrices are often considered to play a more important role than complex symmetric matrices in practice. This is mainly due to the fact that any complex quadratic form generated by a Hermitian matrix always takes real values and all the eigenvalues of a Hermitian matrix are real. It is especially important in many applications, for instance, in quantum physics where Hermitian matrices are operators that measure properties of a system, e.g., total spin which has to be real, and in mathematical optimization whereas objective functions need to be real-valued. Generalizing to high-order tensors, symmetric tensors, no matter in the real or in the complex field, have been paid enormous attention in the recent decade. However, the high-order generalization of Hermitian matrices has not been formally proposed until recently by Jiang et al.~\cite{JLZ16}, who named it as {\em conjugate partial-symmetric} (CPS) tensors. Nevertheless, various examples of CPS tensors can be extracted from real applications in forms of complex polynomial optimization problems. Aittomaki and Koivunen~\cite{AK09} considered the beampattern optimization and formulated it as a complex multivariate quartic minimization model. Aubry et al.~\cite{ADJZ13} modeled a radar signal processing problem by optimizing a complex quartic polynomial which always takes real values.
Josz~\cite{J16} investigated applications of complex polynomial optimization to electricity transmission network. Moreover, Madani et al.~\cite{MLB16} studied the power system state estimation via complex polynomial optimization.

There were several discussions on high-order generalization of Hermitian matrices earlier and recently. The Hermitian tensor product, defined to be the Kronecker product of a Hermitian matrix, has been studied since 1960s~\cite{MM64,K73}. It has many applications in quantum entanglement~\cite{FJS06} and enjoys certain nice properties, such as the Kronecker product of a Hermitian matrix remains a Hermitian matrix. Ni et al.~\cite{NQB14} proposed the unitary eigenvalues and unitary symmetric eigenvalues for complex tensors and symmetric complex tensors, respectively, and demonstrated a relation to the geometric measure of quantum entanglement. Jiang et al.~\cite{JLZ16} characterized real-valued complex polynomial functions and their symmetric tensor representations, which naturally led to the definition of CPS tensors as well as its generalization called conjugate super-symmetric tensors. Eigenvalues and applications for these tensors were discussed as well. Recently, Derksen et al.~\cite{DFLW17} studied entanglement of $d$-partite system in the field of quantum mechanics and introduced the notion of bisymmetric Hermitian tensor, which is essentially the same definition to CPS tensors in~\cite{JLZ16}. Some elementary properties of bisymmetric Hermitian tensors were discussed.

CPS tensors indeed inherit many nice properties of Hermitian matrices. For instance, every symmetric complex form generated by a CPS tensor is real-valued and all the eigenvalues of a CPS tensor are real~\cite{JLZ16}. However, in contrast to the many great efforts on the computational aspect~\cite{ZH06,SZY07,HZ10,JLZ14,FJL18,FF18} of CPS tensors, the analysis of their theoretical properties is still limited and less developed. One particular aspect is decompositions and ranks, which are believed to be one of the most important topics for high-order tensors. This is what the current paper aims for.
As we all know that the generalization of matrices to high-order tensors has led to interesting new findings as well as keeping many nice properties, CPS tensors, as a generalization of Hermitian matrices in terms of the high order and a generalization of real symmetric tensors in terms of the complex field, should also be expected to behave in that sense.
One of our findings states that Comon's type conjecture, i.e., the symmetric rank of a symmetric tensor is equal to the rank of the tensor, applies to CPS tensors is actually invalid at a simple example. We believe the theoretical analysis along this line will provide novel insights into CPS tensors, and hope these new findings will help in future modelling of practical applications. In fact, one of our results on rank-one equivalence via matrix unfolding helps to develop new models and algorithms to compute the best rank-one approximation and the extreme eigenvalue of CPS tensors.

The study of CPS tensors in this paper is focus on ranks, rank-one decompositions and approximations, as well as their applications. The analysis is conducted along side with a more general class of complex tensors called partial-symmetric (PS) tensors. We propose the Hermitian and skew-Hermitian parts of PS tensors, which are helpful to understand structures of PS tensors and CPS tensors. We prove constructively that any CPS tensor can be decomposed into a sum of rank-one CPS tensors, using the tools in additive number theory, specifically, Hilbert's identity~\cite{B02, JHLZ14}. This provides an alternative definition of CPS tensors via real linear combinations of rank-one CPS tensors. We then define three types of ranks for CPS tensors, and show that they are non-identical in general. For CPS tensors, this leads to the invalidity of Comon's conjecture for CPS tensors, albeit it is not the exact form of Comon's conjecture in~\cite{CGLM08}. We further study rank-one approximations of CPS tensors. Depends on the types of rank-one tensors to be considered, rank-one approximations could also be different. As is known in the literature, if the square matricization of an even-order symmetric tensor is rank-one, then the original symmetric tensor is also rank-one. We figure out that the same property does hold for CPS tensors when they are unfolded to Hermitian matrices under a {\em careful way} of matricization. Based on this equivalence, we propose two convex optimization models for the best rank-one approximation of CPS tensors. Several numerical experiments from real data to simulated data are performed to justify the capability of our methods.

This paper is organized as follows. We start with the preparations of various notations, definitions, and elementary properties in Section~\ref{sec:prep}. In Section~\ref{sec:p1}, we study partial-symmetric decompositions of CPS tensors and PS tensors, and discuss several concepts of ranks for CPS tensors. We then focus on rank-one approximations and rank-one equivalence via matricization for CPS tensors in Section~\ref{sec:p2}, which provide an immediate application to find the best rank-one approximation of CPS tensors via convex optimization relaxations. Finally in Section~\ref{sec:numerical}, we conducted several numerical experiments to illustrate effective performance of the methods proposed in Section~\ref{sec:p2}.

\section{Preparations}\label{sec:prep}

Throughout this paper, we uniformly use the lowercase letters, boldface lowercase letters, capital letters, and calligraphic letters to denote scalars, vectors, matrices, and high-order tensors, respectively, e.g., a scalar $x$, a vector $\bx$, a matrix $X$, and a tensor $\X$. We use subscripts to denote their components, e.g., $x_i$ being the $i$-th entry of a vector $\bx$, $X_{ij}$ being the $(i,j)$-th entry of a matrix $X$, and $\X_{ijk}$ being the $(i,j,k)$-th entry of a third-order tensor $\X$. As usual, the field of real numbers and the field of complex numbers are denoted by $\R$ and $\C$, respectively.

For any complex number $z = x + \ii y\in\C$ with $x,y\in\R$, its real part and imaginary part are denoted by $\re z:= x$ and $\im z:= y$, respectively. Its argument is denoted by $\arg(z)$ and its modulus is denoted by $|z|: = \sqrt{\ov{z}z} = \sqrt{x^2 + y^2}$, where $\ov{z}: = x-\ii y$ denotes the conjugate of $z$. For any vector $\bz\in\C^n$, we denote $\bz^{\HH}: = \ov\bz^{\T}$ to be the transpose of its conjugate, and we define it analogously for matrices. The norm of a complex vector $\bz\in\C^n$ is defined as $\|\bz\|: = \sqrt{\bz^{\HH}\bz} = \sqrt{\sum_{i=1}^n |z_i|^2}$.

\subsection{Partial-symmetric tensor and conjugate partial-symmetric tensor}

We consider the space of cubic complex tensors of dimensional $n$ and order $d$, denoted by $\C^{n^d}$. A tensor $\TT\in\C^{n^d}$ is called symmetric if every entry of $\TT$ is invariant under all permutations of its indices, i.e., for every $1\le i_1\le\dots\le i_{d}\le n$,
$$
\TT_{j_1\dots j_d} = \TT_{i_1\dots i_d} \quad \forall\, (j_1,\dots, j_d) \in \Pi (i_1,\dots, i_d),
$$
where $\Pi(i_1,\dots, i_d)$ denotes the set of all distinctive permutations of $\{i_1,\dots,i_d\}$. The set of symmetric tensors in $\C^{n^d}$ is denoted by $\CS^{n^d}$.

\begin{definition} {\em (Jiang et al.~\cite[Definition 2.3]{JLZ16}).} \label{def:ps}
An even-order complex tensor $\TT\in\C^{n^{2d}}$ is called partial-symmetric (PS) if for every $1\le i_1\le\dots\le i_{d}\le n$ and $1\le i_{d+1}\le\dots\le i_{2d}\le n$,
\begin{equation*}
\TT_{j_1\dots j_d j_{d+1}\dots j_{2d}}  =  \TT_{i_1\dots i_d i_{d+1}\dots i_{2d}} \quad\forall \,
(j_1,\dots, j_d) \in \Pi (i_1,\dots, i_d),\, (j_{d+1},\dots, j_{2d}) \in\Pi(i_{d+1},\dots, i_{2d}).
\end{equation*}
\end{definition}
Essentially, a PS tensor is symmetric with respect to its first half modes, and also symmetric with respect to its last half modes, while a symmetric tensor is symmetric with respect to all its modes. The set of PS complex tensors in $\C^{n^{2d}}$ is denoted by $\PS^{n^{2d}}$. When $d=1$, one has $\CS^{n^2}\subsetneq\PS^{n^2}=\C^{n^2}$. However, for $d\ge2$, it is obvious that $\CS^{n^{2d}}\subsetneq\PS^{n^{2d}}\subsetneq \C^{n^{2d}}$.

A special class of PS tensors, called conjugate partial-symmetric tensors, generalizes Hermitian matrices to high-order tensor spaces.
\begin{definition} {\em (Jiang et al.~\cite[Definition 3.7]{JLZ16}).} \label{def:cps}
An even-order complex tensor $\TT\in\C^{n^{2d}}$ is called conjugate partial-symmetric (CPS) if it is partial-symmetric and
$$
\TT_{i_1\dots i_d i_{d+1}\dots i_{2d}} = \ov{\TT_{i_{d+1}\dots i_{2d} i_1\dots i_d}} \quad \forall\, 1\le i_1\le\dots\le i_{d}\le n,\,1\le i_{d+1}\le\dots\le i_{2d}\le n.
$$
\end{definition}
The set of CPS tensors in $\C^{n^{2d}}$ is denoted by $\CPS^{n^{2d}}$. Obviously when $d=1$, $\CPS^{n^{2}}$ is nothing but the set of Hermitian matrices in $\C^{n^2}$. CPS tensors are the high-order generalization of Hermitian matrices. For $d\ge2$, one has $\CPS^{n^{2d}}\subsetneq\PS^{n^{2d}}\subsetneq \C^{n^{2d}}$. However, $\CS^{n^{2d}}$ and $\CPS^{n^{2d}}$ are not comparable, and actually $\CS^{n^{2d}} \cap\CPS^{n^{2d}} = \RS^{n^{2d}}$, the set of real symmetric tensors. This is the the same for complex symmetric matrices and Hermitian matrices.

We remark that PS tensors are closed under addition and multiplication by complex numbers, while CPS tensors are closed under addition and multiplication by real numbers only. This fact may not be obvious from their definitions. In fact, it can be easily seen from their equivalent definitions via partial-symmetric decompositions; see Section~\ref{sec:psdec}.

\subsection{Complex form}

The Frobenius inner product of two complex tensors $\U,\V\in\C^{n^d}$ are defined as
$$
\langle \U, \V \rangle := \sum_{i_1=1}^n\dots\sum_{i_d=1}^n \ov{\U_{i_1\dots i_d}} \V_{i_1\dots i_d},
$$
and its induced Frobenius norm of a complex tensor $\TT$ is naturally defined as
$
\|\TT\|:=\sqrt{\left\langle \TT,\TT\right\rangle}.
$
We remark that these two notations naturally apply to vectors and matrices, which are tensors of order one and order two, respectively. A rank-one tensor, also called a simple tensor, is a tensor that can be written as outer products of vectors, i.e., $\bx_1\otimes\dots\otimes\bx_d\in\C^{n^d}$ where $\bx_k\in\C^n$ for $k=1,\dots,d$.

Given a complex tensor $\TT\in\C^{n^d}$, the multilinear form of $\TT$ is defined as
\begin{align} \label{eq:multilinear}
  \TT(\bx_1,\dots,\bx_d):=\sum_{i_1=1}^n\dots\sum_{i_d=1}^n \TT_{i_1\dots i_d}(x_1)_{i_1}\dots (x_d)_{i_d}
   = \langle \ov\TT, \bx_1\otimes \dots \otimes \bx_d \rangle,
\end{align}
where the variable $\bx_k\in\C^n$ for $k=1,\dots,d$.

If a vector in a multilinear form~\eqref{eq:multilinear} is missing and replaced by a `$\bullet$', say $\TT(\bullet,\bx_2,\dots,\bx_d)$, it then becomes a vector in $\C^n$. Explicitly, the $i$-th entry of $\TT(\bullet,\bx_2,\dots,\bx_d)$ is $\TT(\be_i,\bx_2,\dots,\bx_d)$ for $i=1,\dots,n$, where $\be_i$ is the $i$-th unit vector of $\R^n$. When all $\bx_k$'s in~\eqref{eq:multilinear} are the same, a multilinear form becomes a complex homogenous polynomial function (or complex form) of $\bx\in\C^n$, i.e.,
\begin{align*}
 \TT(\bx^d):= \TT(\underbrace{\bx, \dots, \bx}_d)
   = \langle \ov\TT, \underbrace{\bx \otimes \dots \otimes \bx}_d \rangle
   =: \langle \ov\TT, \bx^{\od} \rangle.
\end{align*}
The notations, $\bx^d$ standing for $d$ copies of $\bx$ in a multilinear form, and $\bx^{\od}$ standing for outer products of $d$ copies of $\bx$, will be used throughout this paper as long as there is no ambiguity.

To our particular interest in this paper, the following conjugate complex form, or conjugate form, is defined by a PS tensor $\TT\in\PS^{n^{2d}}$,
\begin{align*}
 \TT({\ov\bx}^d\bx^d):= \TT(\underbrace{\ov\bx,\dots,\ov\bx}_d, \underbrace{\bx, \dots, \bx}_d)
= \langle \ov\TT, \ov\bx^{\od}\otimes \bx^{\od} \rangle.
\end{align*}
Remark that ${\ov\bx}^d$ and $\bx^d$ in $\TT({\ov\bx}^d\bx^d)$ cannot be swapped as otherwise it becomes a different form. Similarly, we may use the notation $\TT(\bullet\,{\ov\bx}^{d-1}\bx^d)\in\C^n$, which equals $\TT({\ov\bx}^{d-1}\bullet\bx^d)$ since $\TT$ is a PS tensor.

\subsection{Hermitian part and skew-Hermitian part}

It is shown in~\cite{JLZ16} that $\TT({\ov\bx}^d\bx^d)$ is real-valued if and only if $\TT$ is CPS, extending the case of $d=1$, i.e., $A(\ov\bx\bx)$ is real-valued if and only if $A$ is a Hermitian matrix. As is well known, any complex matrix $A\in \C^{n^2}$ can be written as $A= H(A) + S(A)$, where
$$H(A) = \frac{1}{2}(A + A^{\HH}) \mbox{ and } S(A) = \frac{1}{2}(A - A^{\HH})$$
are the Hermitian part and the skew-Hermitian part of $A$, respectively. We extend this concept to high-order PS tensors, which is helpful in the analysis of our results.
\begin{definition} \label{def:hs}
The conjugate transpose of a PS tensor $\TT\in\PS^{n^{2d}}$, denoted by $\TT^{\HH}$, satisfies
$$(\TT^{\HH})_{i_{1}\dots i_{d}i_{d+1}\dots i_{2d}} = \ov{\TT_{i_{d+1}\dots i_{2d}i_{1}\dots i_{d}}} \quad\forall\,
1\le i_{1}\le \dots \le i_{d}\le n,\,1\le i_{d+1}\le \dots \le i_{2d}\le n.
$$
The Hermitian part $\HI(\TT)$ and the skew-Hermitian part $\SI(\TT)$ of a PS tensor $\TT$ are defined as
$$
\HI(\TT) := \frac{1}{2}(\TT + \TT^{\HH}) \mbox{ and } \SI(\TT) := \frac{1}{2}(\TT-\TT^{\HH}),
$$
respectively.
\end{definition}

Obviously, one has $(\TT^{\HH})^{\HH}=\TT$ for a PS tensor $\TT$. It is clear from Definition~\ref{def:cps} that a PS tensor $\TT$ is CPS if and only if $\TT^{\HH}=\TT$, or $\SI(\TT)=\OO$, the zero tensor. The following property can be verified straightforwardly, similar to Hermitian matrices.
\begin{proposition}\label{thm:parts}
  Any PS tensor $\TT$ can be uniquely written as $\TT = \U + \ii \V$, where both $\U$ and $\V$ are CPS tensors. In particular, $\U=\HI(\TT)$ and $\V=-\ii \SI(\TT)$. Moreover, $\TT$ is a CPS tensor if and only if $\V=\OO$.
\end{proposition}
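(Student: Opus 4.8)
The plan is to lean on the two facts already isolated in the text: that a PS tensor $\TT$ is CPS precisely when $\TT^{\HH}=\TT$ (equivalently $\SI(\TT)=\OO$), and that the conjugate-transpose operation $(\cdot)^{\HH}$ on $\PS^{n^{2d}}$ is an involution, $(\TT^{\HH})^{\HH}=\TT$. Before anything else I would record two bookkeeping properties of $(\cdot)^{\HH}$ that are immediate from Definition~\ref{def:hs}: it is additive, $(\U+\V)^{\HH}=\U^{\HH}+\V^{\HH}$, and it is \emph{conjugate}-linear in scalars, $(c\,\W)^{\HH}=\ov{c}\,\W^{\HH}$ for $c\in\C$. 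I would also check entrywise that $\TT^{\HH}$ is itself PS whenever $\TT$ is (swapping the two index blocks and conjugating preserves symmetry within each block), so that $\HI(\TT)$ and $\SI(\TT)$ genuinely lie in $\PS^{n^{2d}}$ and the CPS criterion applies to them.

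For existence I would verify that the two claimed tensors are CPS. Since $(\HI(\TT))^{\HH}=\tfrac12(\TT^{\HH}+(\TT^{\HH})^{\HH})=\tfrac12(\TT^{\HH}+\TT)=\HI(\TT)$, the Hermitian part is CPS. Setting $\V:=-\ii\,\SI(\TT)$ and using conjugate-linearity together with $\SI(\TT)^{\HH}=\tfrac12(\TT^{\HH}-\TT)=-\SI(\TT)$, I obtain $\V^{\HH}=\ov{-\ii}\,\SI(\TT)^{\HH}=\ii\,(-\SI(\TT))=-\ii\,\SI(\TT)=\V$, so $\V$ is CPS as well. Then $\U+\ii\V=\HI(\TT)+\ii(-\ii\,\SI(\TT))=\HI(\TT)+\SI(\TT)=\TT$, which is the desired decomposition with $\U=\HI(\TT)$ and $\V=-\ii\,\SI(\TT)$.

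Uniqueness I would obtain simultaneously with the explicit formulas rather than separately. Given \emph{any} decomposition $\TT=\U+\ii\V$ with $\U,\V\in\CPS^{n^{2d}}$, applying $(\cdot)^{\HH}$ and using additivity, conjugate-linearity, and $\U^{\HH}=\U$, $\V^{\HH}=\V$ yields $\TT^{\HH}=\U-\ii\V$. Adding and subtracting this from $\TT=\U+\ii\V$ forces $\U=\tfrac12(\TT+\TT^{\HH})=\HI(\TT)$ and $\V=\tfrac{1}{2\ii}(\TT-\TT^{\HH})=-\ii\,\SI(\TT)$; since both are determined by $\TT$ alone, the decomposition is unique. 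Finally, the last assertion is immediate from $\V=-\ii\,\SI(\TT)$: we have $\V=\OO$ iff $\SI(\TT)=\OO$ iff $\TT^{\HH}=\TT$ iff $\TT$ is CPS.

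There is no deep obstacle here; the whole statement is a transcription of the matrix identity $A=H(A)+\ii(-\ii\,S(A))$ to the PS setting. The only point demanding care is the conjugate-linearity of $(\cdot)^{\HH}$—it is the conjugation of the scalar $\ii$ that produces the sign flips making $-\ii\,\SI(\TT)$, rather than $\SI(\TT)$, the CPS tensor—so I would keep explicit track of every $\ov{\ii}=-\ii$ to avoid sign errors, and make sure the (easy but necessary) verification that $\TT^{\HH}$ is PS is not silently skipped.
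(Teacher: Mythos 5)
Your proof is correct and follows essentially the same route as the paper: verify that $\HI(\TT)$ and $-\ii\,\SI(\TT)$ are CPS via the involution $(\cdot)^{\HH}$, derive uniqueness by recovering $\U$ and $\V$ from $\TT$ and $\TT^{\HH}$, and read off the final equivalence from $\V=-\ii\,\SI(\TT)$. Your explicit bookkeeping of additivity, conjugate-linearity, and the fact that $\TT^{\HH}$ remains PS is a slightly more careful writeup of steps the paper leaves implicit, but the argument is the same.
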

\begin{proof}
Obviously $\TT=\HI(\TT)+\ii(-\ii \SI(\TT))$. According to Definition~\ref{def:hs}, we have
\begin{align*}
\HI(\TT)^{\HH}&=\frac{1}{2}(\TT + \TT^{\HH})^{\HH}=\frac{1}{2}( \TT^{\HH}+\TT)= \HI(\TT),\\
(-\ii \SI(\TT))^{\HH} &= \left(-\frac{\ii}{2}(\TT-\TT^{\HH})\right)^{\HH}= \frac{\ii}{2}(\TT^{\HH}-\TT)
= -\ii \SI(\TT),
\end{align*}
implying that both $\U=\HI(\TT)$ and $\V=-\ii \SI(\TT)$ are CPS.

For the uniqueness, suppose that $\TT=\X+\ii \Y$ with $\X,\Y$ being CPS. We have
$$
\U=\HI(\TT)=\frac{\TT+\TT^{\HH}}{2}= \frac{\X+\ii \Y + (\X+\ii \Y)^{\HH}}{2} =\frac{\X+\ii \Y + \X^{\HH}-\ii \Y^{\HH}}{2}  = \X.
$$
Using a similar argument, one can show that $\V=\Y$.

If $\TT$ is a CPS tensor, then $$\U+\ii\V=\TT=\TT^{\HH}=\U^{\HH}-\ii\V^{\HH}=\U-\ii\V,$$
implies that $\V=\OO$. If $\V=\OO$, then obviously $\TT$ is a CPS tensor.
\end{proof}

\section{Partial-symmetric decomposition and rank}\label{sec:p1}

This section is devoted to decompositions of PS tensors and CPS tensors. It is an extension of the symmetric
decomposition of symmetric tensors. One main result is to propose a constructive approach to decompose a CPS tensor into a sum of rank-one CPS tensors, and hence provide an alternative definition of CPS tensors via real linear combination of rank-one CPS tensors. Based on these results, we discuss several ranks for PS tensors and CPS tensors, which can be classified as the conjugate version of Waring's decomposition~\cite{BCMT10}

\subsection{Partial-symmetric decomposition}\label{sec:psdec}

Rank-one decompositions play an essential role in exploring structures of high-order tensors. For Hermitian matrices, they enjoy the following type of partial-symmetric decompositions: If $A\in\CPS^{n^2}$, then
\begin{equation}\label{eq:hem}
A = \sum_{j = 1}^r \lambda_j\ov{\ba_j}{\ba_j}^{\T} = \sum_{j = 1}^r \lambda_j\ov{\ba_j}\otimes \ba_j,
\end{equation}
where $\lambda_j\in\R$ and $\ba_j\in\C^n$ for $j = 1,\dots,r$ with some $r\le n$. As the high-order generalization of Hermitian matrices, CPS tensors do inherit this important property. Before presenting the main result in this section, let us first prove a technical result.

\begin{lemma} \label{thm:hilbert}
For any $\bx\in\C^n$, there exists $\ba_j\in\C^n$ and $\lambda_j\in\R$ for $j=1,\dots,m$ with finite $m$, such that
$$\left|\sum_{i=1}^n{x_i}^d\right|^2 =\sum_{j=1}^m\lambda_j \left|{\ba_j}^{\T}\bx\right|^{2d}.$$
\end{lemma}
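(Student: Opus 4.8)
The plan is to expand the squared modulus into a diagonal (Hermitian) part and an off-diagonal (cross) part, and then reduce the cross terms to a single two-variable identity that I would establish by discrete Fourier analysis on the unit circle. First I would write
\[
\left|\sum_{i=1}^n x_i^d\right|^2 = \sum_{i,k=1}^n x_i^d\,\ov{x_k}^{\,d}
= \sum_{i=1}^n |x_i|^{2d} + \sum_{1\le i<k\le n} 2\re\!\left(x_i^d\,\ov{x_k}^{\,d}\right).
\]
The diagonal contribution is already in the desired shape, since $|x_i|^{2d} = |\be_i^{\T}\bx|^{2d}$ with $\be_i$ the $i$-th unit vector. Hence everything reduces to expressing, for each fixed pair $i<k$, the real quantity $2\re(x_i^d\,\ov{x_k}^{\,d})$ as a finite real combination of terms $|\ba^{\T}\bx|^{2d}$ with $\ba$ supported on coordinates $i$ and $k$; that is, to a purely two-variable statement.

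For the two-variable core, set $s=x_i$, $t=x_k$, and note $x_i^d\,\ov{x_k}^{\,d}=(s\ov t)^d$. Writing $A=|s|^2+|t|^2\in\R$ and $B=s\ov t\in\C$, for any $\omega\in\C$ with $|\omega|=1$ one has $\ov\omega=\omega^{-1}$ and
\[
|\omega s+t|^{2d} = \left(A+\omega B+\ov\omega\,\ov B\right)^d
= \sum_{a+b+c=d}\frac{d!}{a!\,b!\,c!}\,\omega^{\,b-c}\,A^a B^b \ov B^{\,c}.
\]
The exponent $b-c$ ranges over $\{-d,\dots,d\}$, and within that range the extreme values $b-c=d$ and $b-c=-d$ are attained only by the single monomials $B^d$ and $\ov B^{\,d}$. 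I would therefore average over the $N$-th roots of unity with $N\ge 2d+1$: using $\frac1N\sum_{\omega^N=1}\omega^{\,p}=1$ when $N\mid p$ and $0$ otherwise, all monomials except the two extreme ones are annihilated, giving $\frac1N\sum_{\omega^N=1}\omega^{-d}|\omega s+t|^{2d}=B^d$ together with its conjugate $\frac1N\sum_{\omega^N=1}\omega^{d}|\omega s+t|^{2d}=\ov B^{\,d}$. Adding these,
\[
2\re\!\left((s\ov t)^d\right)=B^d+\ov B^{\,d}
=\frac1N\sum_{\omega^N=1}\bigl(\omega^{d}+\omega^{-d}\bigr)\,|\omega s+t|^{2d},
\]
and the coefficients $\omega^{d}+\omega^{-d}=2\re(\omega^d)$ are real, exactly as required.

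Finally I would reassemble: substituting back $s=x_i$, $t=x_k$ turns $\omega s+t$ into $(\omega\be_i+\be_k)^{\T}\bx$, so each cross term becomes a finite real combination of $|\ba^{\T}\bx|^{2d}$ with $\ba=\omega\be_i+\be_k\in\C^n$; together with the diagonal terms $|\be_i^{\T}\bx|^{2d}$ this yields the claimed identity with $m=n+\binom{n}{2}N$ and all $\lambda_j\in\R$. The crux of the argument is the two-variable identity, and in particular the choice $N>2d$: this is precisely what guarantees that root-of-unity orthogonality isolates the top-order monomials $B^d,\ov B^{\,d}$ while killing every lower cross term $A^a B^b \ov B^{\,c}$, and it is what makes the decomposition simultaneously \emph{finite} and \emph{real}. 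One could alternatively route the cross terms through Hilbert's identity, but the discrete Fourier argument keeps the vectors $\ba_j$ and the weights $\lambda_j$ completely explicit.
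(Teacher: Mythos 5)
Your proof is correct, but it takes a genuinely different route from the paper's. The paper proves the identity as a complex analogue of Hilbert's identity: it sets up a $(2d+1)\times(2d+1)$ Vandermonde linear system to obtain real weights $z_k$, takes expectations of $\bigl|\sum_i \alpha_{k_i}\xi_i x_i\bigr|^{2d}$ over i.i.d.\ variables $\xi_i$ uniform on the $d$-th roots of unity, and shows the resulting combination equals $\bigl|\sum_i x_i^d\bigr|^2$ plus, when $d$ is even, a spurious term $\frac{(d!)^2}{((d/2)!)^4}\sum_{i<j}|x_i|^d|x_j|^d$ that must be cancelled by a second Vandermonde construction — so the argument splits into odd and even cases and produces a number of rank-one terms that grows exponentially in $n$. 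You instead split $\bigl|\sum_i x_i^d\bigr|^2$ into diagonal terms $|x_i|^{2d}=|\be_i^{\T}\bx|^{2d}$ and pairwise cross terms $2\re(x_i^d\ov{x_k}^d)$, and isolate each cross term by a single-variable root-of-unity filter: with $N\ge 2d+1$, the average $\frac1N\sum_{\omega^N=1}2\re(\omega^d)\,|(\omega\be_i+\be_k)^{\T}\bx|^{2d}$ kills every monomial $A^aB^b\ov B^c$ except $B^d+\ov B^d$, exactly as you argue. This works uniformly in $d$ with no parity distinction, keeps the vectors $\ba_j$ supported on two coordinates, and yields only $n+\binom{n}{2}(2d+1)$ terms with fully explicit real weights — a substantially more economical decomposition than the paper's, which matters downstream since Theorem~\ref{thm:cps} invokes this lemma to bound CPS decomposition lengths. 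The only cosmetic imprecision is the phrase ``all monomials except the two extreme ones are annihilated'': for the $\omega^{-d}$-weighted average only $B^d$ survives (the $\ov B^d$ term has exponent $-2d$, also killed by $N\ge 2d+1$), but your displayed identities state this correctly, so nothing is affected.
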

\begin{proof}
For any nonzero $\alpha_0, \alpha_1, \dots , \alpha_{2d}\in\R$ with $\alpha_i\ne\alpha_j$ if $i\ne j$, consider the following $2d+1$ linear equations with $2d+1$ variables:
\begin{equation}\label{eq:van1}
{\alpha_0}^k z_0 + {\alpha_1}^k z_1 +  \dots  +{\alpha_{2d}}^k z_{2d} = \gamma_k\quad k=0,1,\dots,2d,
\end{equation}
where $\gamma_0=1,\,\gamma_d=\sqrt{d!},\,\gamma_{2d}=d!$, and other $\gamma_k$'s are zeros.
The determinant of the coefficients of~\eqref{eq:van1} is the Vandermonde determinant
$$
\left | \begin{array}{cccc}
1 & 1 & \dots & 1 \\
\alpha_0 & \alpha_1 & \dots &\alpha_{2d}   \\
\vdots &\vdots   &\vdots & \vdots \\
{\alpha_0}^{2d}  & {\alpha_1}^{2d}  & \dots &{\alpha_{2d}}^{2d}\\
\end{array} \right | = \prod_{0 \le i < j \le d} (\alpha_j - \alpha_i) \ne 0,
$$
and so~\eqref{eq:van1} has a unique real solution, which is denoted by $(z_0, z_1, \dots, z_{2d})$ for simplicity.

Denote $\Omega_{d}:=\{e^{\ii\frac{2k\pi}{d}}:k=0,1,\dots,d-1\}$, the set of all complex solutions to $z^d=1$. Let $\xi_1,\dots, \xi_n$ be i.i.d.\ random variables uniformly distributed on $\Omega_{d}$. For any linear function of $\xi_i$'s, say, $c_1\xi_1+\dots + c_n\xi_n$, it follows that
\begin{equation}\label{eq:expand}
\ex \left[ \left(\ov{ c_1\xi_1+\dots + c_n\xi_n}\right)^d \left( c_1\xi_1+\dots + c_n\xi_n\right)^d \right]=
\sum_{d_1+\dots+d_n=d}\left( \frac{d!}{d_1 ! \dots d_n !} \right)^2 \prod_{i=1}^{n} \left| c_i\right|^{2d_i}
+ \sum_{i \ne j} \ov{c_i}^d {c_j}^d.
\end{equation}
To see why~\eqref{eq:expand} holds, we consider all terms $\prod_{i=1}^n\left(\ov{c_i\xi_i}\right)^{d_i}\left(c_i\xi_i\right)^{t_i}$ with $d_1+\dots+d_n=t_1+\dots+t_n=d$ in the left hand side of~\eqref{eq:expand}. They can be classified into three mutually exclusive cases: (i) If there is some $i$ such that $1\le|d_i-t_i|\le d-1$, then
$$
\ex\left[\prod_{i=1}^n\left(\ov{c_i\xi_i}\right)^{d_i}\left(c_i\xi_i\right)^{t_i}\right] =
\ex\left[\ov{c_i}^{d_i}{c_i}^{t_i}\ov{\xi_i}^{d_i}{\xi_i}^{t_i}\right]
\ex\left[\prod_{j\ne i}\left(\ov{c_j\xi_j}\right)^{d_j}\left(c_j\xi_j\right)^{t_j}\right]
 = 0
$$
since $\ex[{\xi_i}^k]=0$ for any integer $k$ with $1\le |k|\le d-1$; (ii) If there are two indices $i\ne j$ such $d_i=t_j=d$, then
$$
\ex\left[\prod_{i=1}^n\left(\ov{c_i\xi_i}\right)^{d_i}\left(c_i\xi_i\right)^{t_i}\right] =
\ex\left[\left(\ov{c_i\xi_i}\right)^d\left(c_j\xi_j\right)^d\right] =\ov{c_i}^d {c_j}^d;
$$
and (iii) If $d_i=t_i$ for all $1\le i\le n$, then
$$
\ex\left[\prod_{i=1}^n\left(\ov{c_i\xi_i}\right)^{d_i}\left(c_i\xi_i\right)^{t_i}\right] = \prod_{i=1}^n\ex\left[\left(\ov{c_i\xi_i}\right)^{d_i}\left(c_i\xi_i\right)^{d_i}\right] = \prod_{i=1}^{n}\left| c_i\right|^{2d_i}
$$
and the number of such terms in~\eqref{eq:expand} is $\left( \frac{d!}{d_1 ! \dots d_n !} \right)^2$.

By applying~\eqref{eq:expand} and~\eqref{eq:van1}, we obtain
\begin{align}
&~~~~\frac{1}{d!} \sum_{k_1 = 0}^{2d} \dots \sum_{k_n = 0}^{2d} \left(\prod_{\ell=1}^nz_{k_\ell}\right)  \ex \left[ \left(\ov{ \alpha_{k_1}\xi_1x_1 + \dots + \alpha_{k_n}\xi_n x_n}\right)^d \left( \alpha_{k_1} \xi_1 x_1 + \dots + \alpha_{k_n} \xi_n x_n\right)^d \right] \nonumber \\
&= \frac{1}{d!}  \sum_{k_1 = 0}^{2d} \dots \sum_{k_n = 0}^{2d} \left(\prod_{\ell=1}^nz_{k_\ell}\right) \left( \sum_{i \ne j} \ov{\alpha_{k_i}}^d \ov{x_i}^d {\alpha_{k_j}}^d{x_j}^d + \sum_{d_1+ \dots+ d_n=d} \left( \frac{d!}{d_1 ! \dots d_n !} \right)^2 \prod_{i=1}^{n} \left| \alpha_{k_i}x_i \right|^{2d_i}
  \right) \nonumber \\
& = \frac{1}{d!} \sum_{i \ne j} \sum_{k_1 = 0}^{2d} \dots \sum_{k_n = 0}^{2d} \left(\prod_{\ell=1}^nz_{k_\ell}\right)  \ov{\alpha_{k_i}}^d {\alpha_{k_j}}^d \ov{x_i}^d{x_j}^d  + d! \sum_{d_1+ \dots+ d_n=d}  \sum_{k_1 = 0}^{2d} \dots \sum_{k_n = 0}^{2d} \prod_{i=1}^{n}  \frac{\left|x_i \right|^{2d_i}}{\left(d_i !\right)^2} \left( {\alpha_{k_i}} ^{2d_i} z_{k_i} \right)  \nonumber \\
&= \frac{1}{d!} \sum_{i \ne j} \ov{x_i}^d{x_j}^d  \left(\sum_{k=0}^{2d} {\alpha_k}^{d} z_k \right)^2 \left(\sum_{k=0}^{2d} z_k \right)^{n-2} + d!\sum_{d_1+ \dots+ d_n=d} \prod_{i=1}^{n} \frac{\left|x_i \right|^{2d_i}}{\left(d_i !\right)^2} \left(\sum_{k=0}^{2d} {\alpha_k}^{2d_i} z_k \right) \nonumber \\
&=  \sum_{i \ne j} \ov{x_i}^d{x_j}^d + \sum_{i =1}^{n} |x_i|^{2d} + \even(d) \frac{\left(d!\right)^2}{\left(\left(d/2\right)!\right)^4}\sum_{i < j}  |x_i|^d  |x_j|^d \nonumber \\
&=  \left|\sum_{i=1}^n {x_i}^d \right |^2 + \even(d) \frac{\left(d!\right)^2}{\left(\left(d/2\right)!\right)^4}\sum_{i < j}  |x_i|^d  |x_j|^d, \label{eq:oddevenexp}
\end{align}
where $\even(d)$ is one if $d$ is even and zero otherwise. Notice that $\frac{1}{d!}\left(\prod_{\ell=1}^nz_{k_\ell}\right)$ is real, and so~\eqref{eq:oddevenexp} provides a constructive expression of $\sum_{j=1}^m\lambda_j \left|{\ba_j}^{\T}\bx\right|^{2d}$ with finite $m$ for $\left|\sum_{i=1}^n {x_i}^d \right |^2$ when $d$ is odd.

When $d$ is even, we consider another system of $d+1$ linear equations with $d+1$ variables:
\begin{equation}\label{eq:van2}
{\beta_0}^{2k} y_0 + {\beta_1}^{2k} y_1 +  \dots  +{\beta_d}^{2k} y_{d} = \delta_k\quad k=0,1,\dots,d,
\end{equation}
where nonzero $\beta_0, \beta_1, \dots , \beta_{d}\in\R$ with ${\beta_i}^2\ne{\beta_j}^2$ if $i\ne j$, and $\delta_0=\delta_{d/2}=1$ and other $\delta_k$'s are zeros.
Similar to the linear system~\eqref{eq:van1} whose Vandermonde determinant is nonzero,~\eqref{eq:van2} also has a unique real solution, denoted by $(y_0, y_1, \dots, y_d)$ for simplicity.

Let $\eta_1,\dots, \eta_n$ be i.i.d.\ random variables uniformly distributed on $\Omega_{d+1}$. Similar to the proof of~\eqref{eq:expand}, it is easy to obtain
$$
\ex \left[ \left(\ov{ c_1\eta_1+\dots + c_n\eta_n}\right)^d \left( c_1\eta_1+\dots + c_n\eta_n\right)^d \right]=
\sum_{d_1+\dots+d_n=d}\left( \frac{d!}{d_1 ! \dots d_n !} \right)^2 \prod_{i=1}^{n} \left| c_i\right|^{2d_i}.
$$
Therefore,
\begin{align*}
&~~~~\sum_{k_1 = 0}^{d} \dots \sum_{k_n = 0}^{d} \left(\prod_{\ell=1}^ny_{k_\ell}\right) \ex\left[ \left(\ov{\beta_{k_1} \eta_1x_1 + \dots + \beta_{k_n}\eta_nx_n}\right)^d \left(\beta_{k_1}\eta_1x_1 + \dots + \beta_{k_n}\eta_nx_n\right)^d \right ]\\
&= \sum_{k_1 = 0}^{d} \dots \sum_{k_n = 0}^{d} \left(\prod_{\ell=1}^ny_{k_\ell}\right) \sum_{d_1+ \dots+ d_n=d} \left( \frac{d!}{d_1! \dots d_n!} \right)^2 \prod_{i=1}^{n} \left| \beta_{k_i}x_i \right|^{2d_i}\\
&= (d!)^2 \sum_{d_1+ \dots+ d_n=d} \sum_{k_1 = 0}^{d} \dots \sum_{k_n = 0}^{d} \prod_{i=1}^n\frac{\left|x_i \right|^{2d_i}}{(d_i!)^2} \left({\beta_{k_i}}^{2d_i}y_{k_i} \right)\\
&= (d!)^2 \sum_{d_1+ \dots+ d_n=d} \prod_{i=1}^{n}  \frac{\left|x_i \right|^{2d_i}}{(d_i !)^2} \left(\sum_{k=0}^d {\beta_k} ^{2d_i} y_k \right) \\
&= \sum_{i<j} \frac{(d!)^2}{((d/2)!)^4}|x_i|^d|x_j|^d,
\end{align*}
where the last inequality is due to~\eqref{eq:van2}. This, together with~\eqref{eq:oddevenexp} for even $d$, gives that
\begin{align*}
\left | \sum_{i=1}^n x_i^d \right |^2 &=\frac{1}{d!} \sum_{k_1 = 0}^{2d} \dots \sum_{k_n = 0}^{2d} \left(\prod_{\ell=1}^nz_{k_\ell}\right)  \ex \left [ \left(\ov{ \alpha_{k_1} \xi_1x_1 + \dots + \alpha_{k_n} \xi_n x_n}\right)^d \left( \alpha_{k_1} \xi_1 x_1 + \dots + \alpha_{k_n} \xi_n x_n\right)^d \right ]\\
& ~~~~- \sum_{k_1 = 0}^{d} \dots \sum_{k_n = 0}^{d} \left(\prod_{\ell=1}^ny_{k_\ell}\right)  \ex \left [ \left(\ov{ \beta_{k_1} \eta_1x_1 + \dots + \beta_{k_n} \eta_n x_n}\right)^d \left( \beta_{k_1} \eta_1 x_1 + \dots + \beta_{k_n} \eta_n x_n\right)^d \right]
\end{align*}
providing an expression of $\sum_{j=1}^m\lambda_j \left|{\ba_j}^{\T}\bx\right|^{2d}$ with finite $m$ for $\left|\sum_{i=1}^n {x_i}^d \right |^2$ when $d$ is even.
\end{proof}
The above lemma can be viewed in some sense as a complex type of Hilbert's identity in the literature (see e.g.,~\cite{B02}), which states that for any positive integers $d$ and $n$, there always exist $\ba_1,\dots,\ba_m\in\R^n$, such chat
$(\bx^{\T}\bx)^d = \sum_{j=1}^m ({\ba_j}^{\T}\bx)^{2d}$.

With Lemma~\ref{thm:hilbert} in hand, we are ready to show that any CPS tensor can be decomposed into a sum of rank-one CPS tensors.
\begin{theorem}\label{thm:cps}
An even-order tensor $\TT \in \C^{n^{2d}}$ is CPS if and only if $\TT$ has the following partial-symmetric decomposition
\begin{equation}\label{eq:cpsd}
\TT = \sum_{j = 1}^{m}\lambda_j \ov{\ba_j}^{\od}  \otimes {\ba_j}^{\od},
\end{equation}
where $\lambda_j \in \R$ and $\ba_j\in \C^{n}$ for $j=1,\dots,m$ with finite $m$.
\end{theorem}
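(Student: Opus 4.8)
This is an equivalence, so I would prove the two directions separately. Sufficiency---that any $\TT$ written as in~\eqref{eq:cpsd} is CPS---is the routine direction: inspecting entries gives $\left(\ov{\ba_j}^{\od}\otimes\ba_j^{\od}\right)_{i_1\dots i_{2d}}=\ov{(a_j)_{i_1}}\cdots\ov{(a_j)_{i_d}}\,(a_j)_{i_{d+1}}\cdots(a_j)_{i_{2d}}$, which is invariant under permutations within the first $d$ and within the last $d$ indices (so each summand is PS) and becomes its own conjugate when the two index blocks are swapped (so each summand satisfies Definition~\ref{def:cps}). Since $\lambda_j\in\R$ and CPS tensors are closed under real linear combinations, $\TT$ is CPS.

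For necessity I would pass to conjugate forms, where Lemma~\ref{thm:hilbert} applies. First I record the dictionary $\left(\ov{\ba}^{\od}\otimes\ba^{\od}\right)(\ov\bx^d\bx^d)=|\ba^\T\bx|^{2d}$, obtained by expanding the multilinear form of the rank-one tensor. Because the monomials $\ov\bx^\alpha\bx^\beta$ with $|\alpha|=|\beta|=d$ are linearly independent and their coefficients recover the (symmetrized) entries of a PS tensor, the real-linear map $\TT\mapsto\TT(\ov\bx^d\bx^d)$ is injective on PS tensors and restricts to an isomorphism between $\CPS^{n^{2d}}$ and the space $V$ of real-valued conjugate forms. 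Hence it suffices to show, constructively, that every $f\in V$ equals $\sum_j\lambda_j|\ba_j^\T\bx|^{2d}$ with $\lambda_j\in\R$; transporting this back through the isomorphism yields~\eqref{eq:cpsd} with finite $m$.

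The heart of the argument is to prove that $W$, the real span of $\{|\ba^\T\bx|^{2d}:\ba\in\C^n\}$, equals $V$. The inclusion $W\subseteq V$ is the dictionary above. For the reverse inclusion I would exploit that substituting $\bx\mapsto M\bx$ preserves $W$, since $|\ba^\T M\bx|^{2d}=|(M^\T\ba)^\T\bx|^{2d}$. Feeding the identity of Lemma~\ref{thm:hilbert} through such substitutions, with $M$ supported on only two rows, produces $|\ell_1^d+\ell_2^d|^2\in W$ for arbitrary linear forms $\ell_1,\ell_2$; scaling $\ell_2$ by a scalar $\mu$ with $\mu^d=\ii$ likewise gives $|\ell_1^d+\ii\ell_2^d|^2\in W$. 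Subtracting the pure terms $|\ell_1|^{2d},|\ell_2|^{2d}\in W$ then isolates both $\re\!\left(\ov{\ell_1}^{\,d}\ell_2^d\right)$ and $\im\!\left(\ov{\ell_1}^{\,d}\ell_2^d\right)$ as elements of $W$, for every pair of linear forms.

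Since the $d$-th powers of linear forms $\bc^\T\bx$ span all degree-$d$ complex forms in $\bx$, bilinearity then places $\re(\ov\phi\psi)$ and $\im(\ov\phi\psi)$ in $W$ for all degree-$d$ forms $\phi,\psi$; specializing $\phi=\bx^\alpha$ and $\psi=\bx^\beta$ shows each $\re(\ov\bx^\alpha\bx^\beta)$ and $\im(\ov\bx^\alpha\bx^\beta)$ lies in $W$, and these span $V$ over $\R$, giving $W=V$. The genuine obstacle is exactly this passage from the single identity of Lemma~\ref{thm:hilbert} to the full span of $V$---that is, manufacturing all off-diagonal Hermitian cross terms $\ov\bx^\alpha\bx^\beta+\ov\bx^\beta\bx^\alpha$ rather than only the diagonal power sum. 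I expect the delicate points to be (i) performing the two-form polarization so that the extracted coefficients stay real, and (ii) using in constructive form the classical fact that $d$-th powers of linear forms span the degree-$d$ forms, so that $m$ is finite and the vectors $\ba_j$ are produced explicitly.
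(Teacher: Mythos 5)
Your sufficiency direction coincides with the paper's: verify that each $\ov{\ba}^{\od}\otimes\ba^{\od}$ is CPS and use closure under real linear combinations. For necessity, however, you take a genuinely different route. The paper first invokes an external structural result (\cite[Proposition 3.9]{JLZ16}) to write $\TT=\sum_j\lambda_j\ov{\Z_j}\otimes\Z_j$ with $\lambda_j\in\{-1,1\}$ and $\Z_j$ symmetric, then takes a finite symmetric rank-one decomposition of each $\Z_j$, so that the associated conjugate form becomes $\bigl|\sum_j y_j^{\,d}\bigr|^2$ in the transformed variable $\by=A\bx$ --- exactly the shape to which Lemma~\ref{thm:hilbert} applies in full generality --- and finally pulls the resulting sum $\sum_k\alpha_k|{\bc_k}^{\T}\bx|^{2d}$ back to a tensor identity via the same form--tensor dictionary you use. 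You bypass the imported proposition entirely: working in the space $V$ of real-valued conjugate forms, you need Lemma~\ref{thm:hilbert} only for two summands, and you manufacture the Hermitian cross terms $\re(\ov\bx^\alpha\bx^\beta)$ and $\im(\ov\bx^\alpha\bx^\beta)$ by the substitution $\ell_2\mapsto\mu\ell_2$ with $\mu^d=\ii$ plus subtraction of the pure powers, then close up under real bilinearity using the classical fact that $d$-th powers of linear forms span the degree-$d$ forms. Both arguments are constructive and both ultimately rest on the same key lemma and on that same spanning fact (the paper needs it too, in the guise of finite symmetric decomposability of $\Z_j$); the trade-off is that the paper's proof is shorter and yields the explicit three-step decomposition algorithm recorded after the theorem, whereas yours is more self-contained (no appeal to \cite[Proposition 3.9]{JLZ16}) at the cost of an extra polarization layer, whose two delicate points you correctly identify and which do go through since $\re(\ov{c_a}e_b)$ and $\im(\ov{c_a}e_b)$ are real and the spanning fact admits explicit constructive forms.
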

\begin{proof}
For any $\ba\in\C^n$, it is straightforward to check that the rank-one tensor $\ov\ba^{\od}\otimes\ba^{\od}$ is CPS by Definition~\ref{def:cps}. In fact, it is symmetric with respect to its first half modes, and also symmetric with respect to its last half modes, resulting a PS tensor. Besides, this PS tensor satisfies
$$
({\ov\ba}^{\od}\otimes\ba^{\od})^{\HH} = \ov{\ba^{\od}}\otimes\ov{{\ov\ba}^{\od}} = {\ov\ba}^{\od}\otimes\ba^{\od},
$$
resulting a CPS tensor. Therefore, as a real linear combination of such rank-one CPS tensors in~\eqref{eq:cpsd}, $\TT$ must also be CPS.

On the other hand, according to~\cite[Proposition 3.9]{JLZ16}, any CPS tensor $\TT$ can be written as
\begin{equation} \label{eq:decom}
\TT = \sum_{j = 1}^q\lambda_j\ov{\Z_j}\otimes\Z_j
\end{equation}
where $\lambda_j\in \{-1,1\}$ and $\Z_j\in\CS^{n^{d}}$ for $j=1,\dots,q$ with finite $q$. It suffices to prove that $\ov\Z\otimes\Z$ admits a decomposition of~\eqref{eq:cpsd} if $\Z\in\CS^{n^d}$.

Since any symmetric complex tensor admits a finite symmetric decomposition (see e.g.,~\cite{CGLM08}), we may let $\Z=\sum_{j=1}^r{\ba_j}^{\od}$ where $\ba_j\in\C^n$ for $j=1,\dots,r$ with finite $r$. For any $\bx\in \C^{n}$, one has
\begin{align*}
(\ov{\Z}\otimes\Z)(\ov\bx^d\bx^d)
=\langle \Z\otimes\ov\Z, \ov\bx^{\od}\otimes \bx^{\od} \rangle
=\langle \Z, \ov\bx^{\od} \rangle \cdot \langle \ov\Z, \bx^{\od} \rangle
=\ov{\Z}(\ov\bx^d)\Z(\bx^d)
=|\Z(\bx^d)|^2
\end{align*}
and
\begin{align*}
\Z(\bx^d)=\langle \ov\Z, \bx^{\od}\rangle = \left\langle \sum_{j=1}^r\ov{\ba_j}^{\od}, \bx^{\od} \right\rangle = \sum_{j=1}^r \langle\ov{\ba_j}^{\od}, \bx^{\od}\rangle
= \sum_{j = 1}^r ({\ba_j}^{\T}\bx)^{d} = \sum_{j = 1}^r{y_j}^{d},
\end{align*}
where we let $\by=A\bx\in\C^r$ and $A=(\ba_1,\dots,\ba_r)^{\T}\in\C^{r\times n}$. Therefore, by Lemma~\ref{thm:hilbert}, there exist $\alpha_k\in\R$ and $\bb_k\in\C^r$ for $k=1,\dots,s$ with finite $s$, such that
$$
(\ov{\Z}\otimes\Z)(\ov\bx^d\bx^d)=|\Z(\bx^d)|^2
 =\left|\sum_{j = 1}^r{y_j}^{d}\right|^{2}
 = \sum_{k = 1}^{s}\alpha_k\left|{\bb_k}^{\T}\by\right|^{2d}
 = \sum_{k = 1}^{s}\alpha_k\left|{\bb_k}^{\T}A\bx\right|^{2d}.
$$
Finally, by letting $\bc_k=A^{\T}\bb_k\in\C^n$ for $k=1,\dots,s$, we obtain
\begin{align*}
(\ov{\Z}\otimes\Z)(\ov\bx^d\bx^d)
 = \sum_{k = 1}^{s} \alpha_k \left|{\bc_k}^{\T}\bx\right|^{2d}
 = \sum_{k = 1}^{s} \alpha_k \langle {\bc_k}^{\od}\otimes \ov{\bc_k}^{\od} ,\ov\bx^{\od}\otimes \bx^{\od}\rangle
 = \left(\sum_{k = 1}^{s}\alpha_k\ov{\bc_k}^{\od}\otimes {\bc_k}^{\od}\right)(\ov\bx^d\bx^d).
\end{align*}
This implies that $\ov{\Z}\otimes\Z=\sum_{k = 1}^{s}\alpha_k\ov{\bc_k}^{\od}\otimes {\bc_k}^{\od}$, completing the whole proof.
\end{proof}

Theorem~\ref{thm:cps} provides an alternative definition of a CPS tensor via a real linear combination of rank-one CPS tensors, i.e.,
$$
\CPS^{n^{2d}}:=\left\{ \sum_{j = 1}^{m}\lambda_j \ov{\ba_j}^{\od} \otimes {\ba_j}^{\od}: \lambda_j\in\R,\,\ba_j\in\C^n,\, j=1,\dots,m\right\}.
$$
The proof of Theorem~\ref{thm:cps} actually develops an explicit algorithm to decompose a general CPS tensor into a sum of rank-one CPS tensors. The procedure involves the following three main steps:
\begin{enumerate}
  \item[(i)] Find $\TT = \sum_{j = 1}^q\lambda_j\ov{\Z_j}\otimes\Z_j$ where $\lambda_j\in \{-1,1\}$ and $\Z_j\in\CS^{n^{d}}$ is symmetric;
  \item[(ii)] Find a symmetric rank-one decomposition for every $\Z_j$, i.e., $\Z_j=\sum_{k=1}^{r_j}{\ba_{jk}}^{\od}$ where $\ba_{jk}\in\C^n$;
  \item[(iii)] Find $\ov{\sum_{k=1}^{r_j}{\ba_{jk}}^{\od}}\otimes\sum_{k=1}^{r_j}{\ba_{jk}}^{\od}=\sum_{k = 1}^{s_j}\alpha_{jk}\ov{\bc_{jk}}^{\od}\otimes {\bc_{jk}}^{\od}$ where $\alpha_{jk}\in\R$ and $\bc_{jk}\in\C^n$ for every $j$.
\end{enumerate}

In fact, PS tensors also enjoy similar decompositions, via {\em complex} linear combinations of rank-one CPS tensors.
\begin{corollary}\label{thm:ps}
An even-order tensor $\TT \in \C^{n^{2d}}$ is PS if and only if $\TT$ has the following partial-symmetric decomposition
\begin{equation}\label{eq:psd}
\TT = \sum_{j = 1}^{m}\lambda_j \ov{\ba_j}^{\od}  \otimes {\ba_j}^{\od},
\end{equation}
where $\lambda_j \in \C$ and $\ba_j\in \C^{n}$ for $j=1,\dots,m$ with finite $m$.
\end{corollary}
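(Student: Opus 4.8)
The plan is to obtain this as a corollary of Theorem~\ref{thm:cps} together with the Hermitian/skew-Hermitian splitting in Proposition~\ref{thm:parts}, so that essentially no new computation is required. For the ``if'' direction, I would first recall from the proof of Theorem~\ref{thm:cps} that each rank-one tensor $\ov{\ba_j}^{\od}\otimes{\ba_j}^{\od}$ is CPS, and hence in particular PS. Since PS tensors are closed under addition and multiplication by complex scalars (the remark following Definition~\ref{def:cps}), any complex linear combination $\sum_{j=1}^m\lambda_j\ov{\ba_j}^{\od}\otimes{\ba_j}^{\od}$ with $\lambda_j\in\C$ is again PS, so~\eqref{eq:psd} forces $\TT$ to be PS.

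For the ``only if'' direction, I would invoke Proposition~\ref{thm:parts} to write the PS tensor $\TT$ uniquely as $\TT=\U+\ii\V$ with $\U=\HI(\TT)$ and $\V=-\ii\SI(\TT)$ both CPS. Applying Theorem~\ref{thm:cps} separately to $\U$ and to $\V$ produces real-coefficient partial-symmetric decompositions $\U=\sum_{j}\mu_j\ov{\ba_j}^{\od}\otimes{\ba_j}^{\od}$ and $\V=\sum_{k}\nu_k\ov{\bb_k}^{\od}\otimes{\bb_k}^{\od}$ with $\mu_j,\nu_k\in\R$ and finitely many terms. Substituting and absorbing the factor $\ii$ into the coefficients of the second sum gives
\begin{equation*}
\TT=\sum_{j}\mu_j\,\ov{\ba_j}^{\od}\otimes{\ba_j}^{\od}+\sum_{k}(\ii\nu_k)\,\ov{\bb_k}^{\od}\otimes{\bb_k}^{\od},
\end{equation*}
which is exactly a decomposition of the form~\eqref{eq:psd}, since after relabeling all coefficients lie in $\C$.

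There is no genuinely hard step here: the entire content is carried by Theorem~\ref{thm:cps} and Proposition~\ref{thm:parts}, and the role of the corollary is merely to observe that passing from real to complex coefficients precisely accounts for the extra skew-Hermitian degree of freedom distinguishing a general PS tensor from a CPS one. The only point deserving a line of care is the closure of PS tensors under complex scalar multiplication used in the ``if'' direction, which holds because the partial-symmetry constraints of Definition~\ref{def:ps} are linear and involve no conjugation, so neither scaling by an arbitrary $\lambda\in\C$ nor summation can destroy them.
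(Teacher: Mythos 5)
Your proof is correct and follows essentially the same route as the paper: the ``only if'' direction via Proposition~\ref{thm:parts} and Theorem~\ref{thm:cps} with the factor $\ii$ absorbed into the coefficients, and the ``if'' direction by direct verification from Definition~\ref{def:ps}. Your closing observation that closure of PS tensors under complex scaling must be checked directly from the definition (rather than quoted from the remark, which the paper itself justifies via this corollary) is a good catch that keeps the argument non-circular.
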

\begin{proof}
The proof of the `if' part can be straightforwardly verified by Definition~\ref{def:ps} using a PS decomposition as that in the proof of Theorem~\ref{thm:cps}.

For the `only if' part, by Proposition~\ref{thm:parts}, $\TT=\HI(\TT)+\SI(\TT)$ where $\HI(\TT)$ and $\ii \SI(\TT)$ are CPS. By Theorem~\ref{thm:cps}, both $\HI(\TT)$ and $\ii \SI(\TT)$ can be decomposed into a sum of rank-one CPS tensors as in~\eqref{eq:cpsd}, with coefficients being real numbers. Therefore, $\TT=\HI(\TT)+ (-\ii)\ii \SI(\TT)$ can be decomposed into a sum of rank-one CPS tensors as in~\eqref{eq:psd}, with coefficients being complex numbers.
\end{proof}

Corollary~\ref{thm:ps} also provides an alternative definition of a PS tensor via a {\em complex} linear combination of rank-one CPS tensors, i.e.,
$$
\PS^{n^{2d}}:=\left\{ \sum_{j = 1}^{m} \lambda_j \ov{\ba_j}^{\od} \otimes {\ba_j}^{\od}: \lambda_j\in\C,\,\ba_j\in\C^n,\, j=1,\dots,m \right\}.
$$
In terms of rank-one decompositions shown in Theorem~\ref{thm:cps} and Corollary~\ref{thm:ps}, CPS tensors and PS tensors are straightforward generalization of Hermitian matrices and complex matrices, respectively.

Some remarks on decompositions of PS tensors are necessary in place. From Definition~\ref{def:ps}, in particular the symmetricity with respect to its first half modes and symmetricity with respect to its last half modes, it can be shown that any PS tensor $\TT\in\PS^{n^{2d}}$ can also be decomposed as
\begin{equation} \label{eq:psdnew}
  \TT=\sum_{j=1}^m{\ba_j}^{\od}\otimes{\bb_j}^{\od},
\end{equation}
where $\ba_j,\,\bb_j\in\C^n$ for $j=1,\dots,m$ with some finite $m$. This decomposition seems natural from its original definition, but is quite different to and less symmetric than~\eqref{eq:psd} in Corollary~\ref{thm:ps}. In fact,~\eqref{eq:psdnew} can be immediately obtained from~\eqref{eq:psd} by absorbing each $\lambda_j$ into $\ov{\ba_j}^{\od}$. This makes the decomposition~\eqref{eq:psd} interesting as it links the first half and the last half modes of a PS tensor, which is not obvious either from Definition~\ref{def:ps} or the decomposition~\eqref{eq:psdnew}. Even for $d=1$,~\eqref{eq:psd} reduces to that any complex matrix $A\in\C^{n^2}$ can be written as $A = \sum_{j = 1}^m \lambda_j{\ba_j}{\ba_j}^{\HH}$ with $\lambda_j\in\C$ and $\ba_j\in\C^n$ for $j = 1,\dots,m$, which, to the best of our knowledge, has not been seen in the literature. However, the connection between CPS tensors and PS tensors makes~\eqref{eq:psd} more straightforward as a consequence of Theorem~\ref{thm:cps}.

\subsection{Partial-symmetric rank}

The discussion in Section~\ref{sec:psdec} obviously raises the question for the shortest partial-symmetric decomposition, a common question of interest for matrices and high-order tensors, called rank. For any tensor $\TT\in\C^{n^d}$, the rank of $\TT$, denoted by $\rank(\TT)$, is the smallest number $r$ that $\TT$ can be written as a sum of rank-one complex tensors, i.e.,
$$
\rank(\TT): = \min\left\{r :\TT = \sum_{j = 1}^r \ba_{j1}\otimes\dots\otimes\ba_{jd}, \,\ba_{jk}\in\C^n,\,j = 1,\dots,r, \, k=1,\dots,d \right\}.
$$

Depends on the types of rank-one tensors, we define the partial-symmetric rank and the conjugate partial-symmetric rank as follows.
\begin{definition}\label{def:psrank}
The partial-symmetric rank (PS rank) of a PS tensor $\TT\in\PS^{n^{2d}}$, denoted by $\rank_\ps(\TT)$, is defined as
$$
\rank_\ps(\TT): = \min\left\{r :\TT = \sum_{j = 1}^r \lambda_j \ov{\ba_j}^{\od}\otimes{\ba_j}^{\od},\,
\lambda_j\in\C,\,\ba_j\in\C^n,\,j = 1,\dots,r \right\}.
$$
The conjugate partial-symmetric rank (CPS rank) of a CPS tensor $\TT\in\CPS^{n^{2d}}$, denoted by $\rank_\cps(\TT)$, is defined as
$$
\rank_\cps(\TT): = \min\left\{r :\TT = \sum_{j = 1}^r \lambda_j \ov{\ba_j}^{\od}\otimes{\ba_j}^{\od},\,
\lambda_j\in\R,\,\ba_j\in\C^n,\,j = 1,\dots,r \right\}.
$$
\end{definition}

To echo the discussion at the end of Section~\ref{sec:psdec}, we remark that by the original definition (Definition~\ref{def:ps}) of PS tensors, another rank for PS tensors can be defined based on the decomposition~\eqref{eq:psdnew}, i.e., the minimum $r$ such that $\TT = \sum_{j = 1}^r {\ba_j}^{\od}\otimes{\bb_j}^{\od}$. This rank is different to the PS rank in Definition~\ref{def:psrank} (see Example~\ref{ex:notsame}), and is not in the scope of this paper. Our interest here is to emphasize the conjugate property and to better understand CPS tensors.

Obviously by Definition~\ref{def:psrank}, for a CPS tensor $\TT$, one has
\begin{equation}
  \rank(\TT) \le \rank_\ps(\TT) \le \rank_\cps(\TT). \label{eq:ranksame}
\end{equation}
An interesting question is whether the above inequality is an equality or not. It is obvious that~\eqref{eq:ranksame} holds at equality when the rank, PS rank, or CPS rank of a CPS tensor is one. The equality also holds in the case of matrices, i.e., for any Hermitian matrix, the three ranks must be the same. However, this is not true in general for high-order CPS tensors, as stipulated in Theorem~\ref{thm:ranknotesame}.

In $\CS^{n^d}$, the space of symmetric tensors, a similar problem was posed by Comon: The symmetric rank of a symmetric tensor is equal to the rank of the tensor, known as Comon's conjecture~\cite{CGLM08}. It received a considerable amount of attention in recent years; see e.g.,~\cite{S17} and references therein. Comon's conjecture was shown to be true in various special cases and was claimed to be invalid by a sophisticate counter example of complex symmetric tensor in a recent manuscript~\cite{S17}. Nevertheless, the real version of Comon's conjecture remains open. Our result on the ranks of CPS tensors below can be taken as a disproof for the conjugate version of Comon's conjecture. In fact, our counter example (Example~\ref{ex:notsame}) is very simple.

\begin{theorem}\label{thm:ranknotesame}
If $\TT\in\CPS^{n^{2d}}$ is a CPS tensor, then
$$\rank(\TT)\le \rank_\ps(\TT)=\rank_\cps(\TT).$$
Moreover, there exists a CPS tensor $\TT$ such that $\rank(\TT)< \rank_\ps(\TT)$.
\end{theorem}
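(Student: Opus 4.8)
The plan is to separate the statement into the rank \emph{equality} and the \emph{strict inequality}. For the equality, note that the chain $\rank(\TT)\le\rank_\ps(\TT)\le\rank_\cps(\TT)$ is exactly~\eqref{eq:ranksame}, so it remains to prove the reverse bound $\rank_\cps(\TT)\le\rank_\ps(\TT)$. I would start from a shortest PS decomposition $\TT=\sum_{j=1}^r\lambda_j\,\ov{\ba_j}^{\od}\otimes\ba_j^{\od}$ with $r=\rank_\ps(\TT)$ and $\lambda_j\in\C$. As already observed in the proof of Theorem~\ref{thm:cps}, each rank-one piece $\mathcal R_j:=\ov{\ba_j}^{\od}\otimes\ba_j^{\od}$ satisfies $\mathcal R_j^{\HH}=\mathcal R_j$. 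Since the conjugate transpose is conjugate-linear and $\TT$ is CPS (so $\TT^{\HH}=\TT$), applying $\HI$ gives
$$\TT=\HI(\TT)=\tfrac12(\TT+\TT^{\HH})=\sum_{j=1}^r\frac{\lambda_j+\ov{\lambda_j}}{2}\,\mathcal R_j=\sum_{j=1}^r(\re\lambda_j)\,\mathcal R_j,$$
a PS decomposition with \emph{real} coefficients and at most $r$ terms. Hence $\rank_\cps(\TT)\le r=\rank_\ps(\TT)$, and combined with~\eqref{eq:ranksame} the three-way inequality collapses to $\rank_\ps(\TT)=\rank_\cps(\TT)$.

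For the strict inequality I would exhibit a concrete example: the order-four, dimension-two CPS tensor $\TT=\be_1^{\otimes 2}\otimes\be_2^{\otimes 2}+\be_2^{\otimes 2}\otimes\be_1^{\otimes 2}$, where $\be_1,\be_2$ are the standard basis vectors of $\R^2$. One checks directly from Definition~\ref{def:cps} that $\TT$ is CPS. Its square matricization (first two modes as the row multi-index, last two as the column multi-index) is the $4\times4$ Hermitian matrix $M$ with $M_{14}=M_{41}=1$ and all other entries zero, whose rank is $2$; since any matricization can only lower rank we get $\rank(\TT)\ge\rank(M)=2$, while the displayed two-term expression gives $\rank(\TT)\le 2$, so $\rank(\TT)=2$. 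The conceptual point is that the two real eigenvectors of $M$, namely $(1,0,0,\pm1)^{\T}/\sqrt2$, are \emph{not} vectorizations of symmetric rank-one tensors $\ba^{\otimes 2}$, so the unconstrained rank-two decomposition of $M$ is not a partial-symmetric one.

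To turn this into a proof that $\rank_\ps(\TT)\ge3$ I would argue by contradiction, assuming a two-term PS decomposition $\TT=\lambda_1\ov{\ba}^{\otimes 2}\otimes\ba^{\otimes 2}+\lambda_2\ov{\bb}^{\otimes 2}\otimes\bb^{\otimes 2}$; by the equality just proved we may take $\lambda_1,\lambda_2\in\R$. Passing to the conjugate form and using $(\ov{\ba}^{\otimes2}\otimes\ba^{\otimes2})(\ov\bx^2\bx^2)=|\ba^{\T}\bx|^4$, this becomes the scalar identity $\lambda_1|\ba^{\T}\bx|^4+\lambda_2|\bb^{\T}\bx|^4=\TT(\ov\bx^2\bx^2)=\ov{x_1}^2x_2^2+x_1^2\ov{x_2}^2$ for all $\bx\in\C^2$. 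Writing $\ba^{\T}\bx=p_1x_1+q_1x_2$ and $\bb^{\T}\bx=p_2x_1+q_2x_2$ and matching the coefficients of the nine bidegree-$(2,2)$ monomials in $(\bx,\ov\bx)$ yields a small system: vanishing of the $|x_1|^4$, $|x_2|^4$, $|x_1|^2|x_2|^2$ coefficients forces $\lambda_1,\lambda_2$ to have opposite signs and ties the moduli via $|p_1|^2=\sqrt\mu\,|p_2|^2$ and $|q_1|^2=\sqrt\mu\,|q_2|^2$ with $\mu=-\lambda_2/\lambda_1>0$; vanishing of a cross coefficient such as that of $\ov{x_1}\ov{x_2}x_1^2$ then forces $p_1\ov{q_1}=\sqrt\mu\,p_2\ov{q_2}$; substituting this into the equation for the $x_1^2\ov{x_2}^2$ coefficient collapses its left-hand side to $(p_2\ov{q_2})^2(\lambda_1\mu+\lambda_2)=0$, contradicting that this coefficient must equal $1$. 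The degenerate branches are dispatched quickly, since a single term $\lambda|\ell|^4$ has constant sign whereas $\TT(\ov\bx^2\bx^2)=2\re(\ov{x_1}^2x_2^2)$ changes sign. This yields $\rank_\ps(\TT)\ge3>2=\rank(\TT)$.

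I expect the main obstacle to be this lower bound $\rank_\ps(\TT)\ge3$: the equality half is a soft ``take real parts'' argument, whereas excluding \emph{every} two-term partial-symmetric decomposition requires either the monomial bookkeeping above or, equivalently, showing that the Hermitian matrix $M$ admits no two-term decomposition into rank-one pieces drawn from the Veronese variety of symmetric rank-one tensors. The computation is elementary but must be organized so that all degenerate cases are closed.
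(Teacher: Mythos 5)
Your proposal is correct and follows essentially the same route as the paper: the equality $\rank_\ps=\rank_\cps$ is the same ``take real parts of the coefficients'' argument (the paper routes it through the uniqueness in Proposition~\ref{thm:parts} rather than applying $\HI(\cdot)$ directly, but these are identical in substance), and the counterexample is the very tensor of Example~\ref{ex:notsame}, with your exclusion of two-term PS decompositions by matching bidegree-$(2,2)$ coefficients being the same computation as the paper's entrywise comparison of $\TT_{1111},\TT_{1112},\TT_{2222},\TT_{1122}$. The only genuine (and welcome) refinement is your use of the matricization rank bound to certify $\rank(\TT)\ge 2$, where the paper merely asserts this is easy.
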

\begin{proof}
Let $\rank_\ps(\TT)=r$ and $\TT$ has the following PS decomposition
$$\TT=\sum_{j = 1}^r \lambda_j \ov{\ba_j}^{\od}\otimes{\ba_j}^{\od}.$$
where $\lambda_j\in\C$ and $\ba_j\in\C^n$ for $j = 1,\dots,r$. It is easy to see that $\TT$ can be written as
$$
\TT= \sum_{j = 1}^r (\re\lambda_j) \ov{\ba_j}^{\od}\otimes{\ba_j}^{\od} + \ii \sum_{j = 1}^r (\im\lambda_j) \ov{\ba_j}^{\od}\otimes{\ba_j}^{\od}.
$$
We notice that both $\sum_{j = 1}^r (\re\lambda_j) \ov{\ba_j}^{\od}\otimes{\ba_j}^{\od}$ and $\sum_{j = 1}^r (\im\lambda_j) \ov{\ba_j}^{\od}\otimes{\ba_j}^{\od}$ are CPS tensors. By the uniqueness result in Proposition~\ref{thm:parts} and the fact that $\TT$ is already CPS, $\sum_{j = 1}^r (\im\lambda_j) \ov{\ba_j}^{\od}\otimes{\ba_j}^{\od}$ must be a zero tensor. Therefore,
$$
\TT= \sum_{j = 1}^r (\re\lambda_j) \ov{\ba_j}^{\od}\otimes{\ba_j}^{\od}.
$$
This implies that $\rank_\cps(\TT)\le r=\rank_\ps(\TT)$ since $\re\lambda_j\in\R$. Together with the obvious fact that $\rank_\cps(\TT)\ge\rank_\ps(\TT)$ we conclude $\rank_\cps(\TT)=\rank_\ps(\TT)$.

Example~\ref{ex:notsame} shows a CPS tensor $\TT$ with $\rank(\TT)< \rank_\ps(\TT)$.
\end{proof}

\begin{example}\label{ex:notsame}
  Let $\TT\in\CPS^{2^4}$ where $\TT_{1122}=\TT_{2211}=1$ and other entries are zeros. It follows that
  $$\rank(\TT)=2<\rank_\ps(\TT)=\rank_\cps(\TT).$$
\end{example}
\begin{proof}
Obviously $\TT$ can be written as a sum of two rank-one tensors, each matching a nonzero entry of $\TT$. It is also easy to show that $\rank(\TT)\ne1$ by contradiction. Therefore, $\rank(\TT)=2$.

We now prove $\rank_\cps(\TT)\ge3$ by contradiction. By Theorem~\ref{thm:ranknotesame}, $\rank_\cps(\TT)=\rank_\ps(\TT)\ge2$. Suppose on the contrary one has $\rank_\cps(\TT)=\rank_\ps(\TT)=2$. There exist nonzero $\lambda_1,\lambda_2\in\R$ and $\bu=(u_1,u_2)^{\T},\bv=(v_1,v_2)^{\T}\in\C^2$ such that
$$\TT=\lambda_1\ov{\bu}\otimes\ov{\bu}\otimes\bu\otimes\bu+\lambda_2\ov{\bv}\otimes\ov{\bv}\otimes\bv\otimes\bv.$$
By comparing the entries $\TT_{1111}=\TT_{1112}=\TT_{2222}=0$ and $\TT_{1122}=1$, we obtain
\begin{subequations}\label{eq:1}
\begin{align}
0&=\lambda_1|u_1|^4+\lambda_2|v_1|^4, \label{eq:1A}\\
0&=\lambda_1|u_1|^2\ov{u_1}u_2+\lambda_2|v_1|^2\ov{v_1}v_2, \label{eq:1B}\\
0&=\lambda_1|u_2|^4+\lambda_2|v_2|^4, \label{eq:1D}\\
1&=\lambda_1\ov{u_1}^2{u_2}^2+\lambda_2\ov{v_1}^2{v_2}^2. \label{eq:1E}
\end{align}
\end{subequations}

First, we claim that none of $u_1,u_2,v_1,v_2$ can be zero. Otherwise, if either $u_1$ or $v_1$ is zero, we have both $u_1$ and $v_1$ are zeros by~\eqref{eq:1A}, which invalidates~\eqref{eq:1E}. In the other case, if either $u_2$ or $v_2$ is zero, we have both $u_2$ and $v_2$ are zeros by~\eqref{eq:1D}, which also invalidates~\eqref{eq:1E}.

Let us now multiply $u_2$ to~\eqref{eq:1A} and multiply $u_1$ to~\eqref{eq:1B}, and we obtain
\begin{align*}
0&=\lambda_1|u_1|^4u_2+\lambda_2|v_1|^4u_2, \\
0&=\lambda_1|u_1|^4u_2+\lambda_2|v_1|^2\ov{v_1}v_2u_1.
\end{align*}
Combining the above two equations leads to
$$ \lambda_2|v_1|^2\ov{v_1}(v_1u_2-v_2u_1)=0,$$
which implies that $v_1u_2=v_2u_1$, i.e., $\frac{u_1}{v_1}=\frac{u_2}{v_2}$. There exists $\alpha\in\C$ such that $\bu=\alpha\bv$, and so we get
$$\TT=\lambda_1\ov{\alpha\bv}\otimes\ov{\alpha\bv}\otimes(\alpha\bv)\otimes(\alpha\bv) +\lambda_2\ov{\bv}\otimes\ov{\bv}\otimes\bv\otimes\bv = (\lambda_1|\alpha|^4+\lambda_2)\ov{\bv}\otimes\ov{\bv}\otimes\bv\otimes\bv.$$
Therefore, we arrive at $\rank_\cps(\TT)\le1$, which is obviously a contradiction.
\end{proof}

Although Example~\ref{ex:notsame} invalids the conjugate version of Comon's conjecture, the rank and the PS rank of a generic PS tensor (including CPS tensor) can still be the same when its PS rank is no more than its dimension; see Proposition~\ref{thm:ranksame}. This is similar to~\cite[Proposition 5.3]{CGLM08} for a generic symmetric complex tensor.

\begin{proposition}\label{thm:ranksame}
 If a PS tensor $\TT\in\PS^{n^{2d}}$ satisfies $\rank_\ps(\TT)\le n$, then $\rank(\TT)=\rank_\ps(\TT)$ generically.
\end{proposition}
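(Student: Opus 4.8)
The plan is to bound the ordinary rank $\rank(\TT)$ from below by matricizing $\TT$, and to show that for generic decomposition data this lower bound already equals $\rank_\ps(\TT)$. Writing $r:=\rank_\ps(\TT)\le n$, I start from a shortest PS decomposition
\[
\TT=\sum_{j=1}^r\lambda_j\,\ov{\ba_j}^{\od}\otimes\ba_j^{\od},\qquad \lambda_j\in\C,\ \ba_j\in\C^n .
\]
First I flatten $\TT$ into the square matrix $M\in\C^{n^d\times n^d}$ obtained by grouping the first $d$ modes as row indices and the last $d$ modes as column indices. Under this unfolding each rank-one summand becomes the rank-one matrix $\bw_j\bv_j^{\T}$, where $\bv_j:=\vct(\ba_j^{\od})\in\C^{n^d}$ and $\bw_j:=\vct(\ov{\ba_j}^{\od})=\ov{\bv_j}$, so that
\[
M=\sum_{j=1}^r\lambda_j\,\bw_j\bv_j^{\T}=W\Lambda V^{\T},
\]
with $W=[\bw_1,\dots,\bw_r]=\ov V$, $V=[\bv_1,\dots,\bv_r]$, and $\Lambda=\text{diag}(\lambda_1,\dots,\lambda_r)$. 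Since any matricization of a tensor has rank at most the tensor rank (a rank-$s$ decomposition of $\TT$ flattens to a sum of $s$ rank-one matrices), I obtain $\rank(\TT)\ge\rank(M)$.

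The crux is to show $\rank(M)=r$ for generic data. This holds as soon as the columns of $V$ are linearly independent (equivalently those of $W=\ov V$) and every $\lambda_j\ne0$: then $W$ and $V$ have full column rank $r$ and $\Lambda$ is invertible, forcing $\rank(W\Lambda V^{\T})=r$. Thus the statement reduces to the generic linear independence of the power vectors $\ba_1^{\od},\dots,\ba_r^{\od}\in\C^{n^d}$ when $r\le n$. I would prove this by the standard argument: linear independence is the non-vanishing of some fixed $r\times r$ minor of $V$, a polynomial in the entries of the $\ba_j$, which therefore fails only on a proper Zariski-closed (hence measure-zero) set unless it vanishes identically. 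To rule out identical vanishing I exhibit one configuration where independence holds, namely $\ba_j=\be_j$ for $j=1,\dots,r$, so that $\ba_j^{\od}=\be_j^{\od}$ are distinct coordinate tensors and thus manifestly independent; this uses only $r\le n\le\binom{n+d-1}{d}=\dim\CS^{n^d}$. Consequently the minor is not identically zero, and independence holds off a proper subvariety.

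Putting these together, for generic decomposition data $(\lambda_j,\ba_j)$ one has $\rank(M)=r$, and hence
\[
r=\rank(M)\le\rank(\TT)\le\rank_\ps(\TT)\le r,
\]
where the last inequality uses the explicit $r$-term decomposition and the middle inequality is immediate from Definition~\ref{def:psrank} (every PS decomposition is a decomposition into rank-one tensors). All quantities collapse to $r$, giving $\rank(\TT)=\rank_\ps(\TT)$ generically.

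I expect the main obstacle to be making the word ``generically'' precise and pinning down the generic independence of the power (Veronese) vectors: one must check that the exceptional locus where the chosen minor vanishes is a \emph{proper} subvariety, so that its complement is Zariski-dense and of full measure, and that the base point $\ba_j=\be_j$ genuinely lies outside it. The matricization inequality and the final rank squeeze are then routine once independence is in hand.
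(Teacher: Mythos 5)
Your proposal is correct, but it reaches the conclusion by a genuinely different route from the paper. The paper works directly with the two decompositions $\sum_{j=1}^{r}\bc_{j1}\otimes\cdots\otimes\bc_{j,2d}=\TT=\sum_{j=1}^{m}\lambda_j\,{\ba_j}^{\od}\otimes\ov{\ba_j}^{\od}$: it invokes the generic linear independence of $\ba_1,\dots,\ba_m$ in $\C^n$ (which is where the hypothesis $m\le n$ enters), builds dual vectors $\bx_k$ with $\langle\bx_i,\ba_j\rangle=\delta_{ij}$, and contracts both sides against $\bullet\,\ov{\bx_k}^{d-1}{\bx_k}^{d}$ to conclude that each $\ba_k$ lies in the span of $\{\bc_{11},\dots,\bc_{r1}\}$, whence $m\le r$. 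You instead flatten $\TT$ to the $n^d\times n^d$ matrix $M=W\Lambda V^{\T}$ and use the inequality $\rank(M)\le\rank(\TT)$ together with the generic linear independence of the Veronese vectors ${\ba_j}^{\od}$. Both arguments are sound and share the same (mild) imprecision about what ``generically'' quantifies over, namely the decomposition data $(\lambda_j,\ba_j)$ rather than the tensor itself; you flag this explicitly, and your witness $\ba_j=\be_j$ does settle the non-vanishing of the relevant minor. The trade-off is instructive: your flattening argument only needs $r\le\binom{n+d-1}{d}$ for the independence of the ${\ba_j}^{\od}$, so it actually proves the conclusion under a weaker hypothesis than $\rank_\ps(\TT)\le n$, whereas the paper's contraction argument genuinely requires $m\le n$ but avoids any appeal to matricization and stays entirely within the multilinear-form calculus used elsewhere in the paper. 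One small point to make explicit if you write this up: in a shortest PS decomposition every $\lambda_j$ is automatically nonzero (otherwise the decomposition could be shortened), which is what makes $\Lambda$ invertible.
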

\begin{proof}
Let $\rank(\TT)=r$ and $\rank_\ps(\TT)=m\le n$. There exist decompositions
\begin{equation}\label{eq:rankcomon}
\sum_{j=1}^r\bc_{j1}\otimes  \dots\otimes\bc_{j2d} = \TT =\sum_{j=1}^{m}\lambda_j {\ba_j}^{\od}\otimes\ov{\ba_j}^{\od},
\end{equation}
where {$\bc_{jk}\in\C^n$} for $j=1,\dots,r$ and $k=1,\dots,2d$, and nonzero $\lambda_j\in \C$ and $\ba_j\in\C^n$ for $j=1,\dots,m$. As $m\le n$, it is not difficulty to show that the set of $n$-dimensional vectors $\{\ba_1,\dots,\ba_m\}$ are generically linearly independent; see e.g.,~\cite[Lemma 5.2]{CGLM08}. As a consequence, one may find $\bx_j\in\C^n$ for $j=1,\dots,m$, such that
        $$
		\langle \bx_i, \ba_j \rangle = \left\{
		\begin{array}{ll}
			1& i=j\\
			0& i\ne j.
		\end{array}\right.
        $$
By applying the multilinear form of $\bullet\,\ov{\bx_k}^{d-1}{\bx_k}^{d}$ on both sides of~\eqref{eq:rankcomon}, we obtain
\begin{align*}
\left(\sum_{j=1}^r\bc_{j1}\otimes  \dots\otimes\bc_{j2d}\right) (\bullet\,\ov{\bx_k}^{d-1}{\bx_k}^{d}) &=  \sum_{j=1}^r \left(\prod_{i=2}^{d} \langle \ov{\bc_{ji}}, \ov{\bx_k} \rangle\right)
\left(\prod_{i=d+1}^{2d} \langle \ov{\bc_{ji}}, \bx_k \rangle\right)\bc_{j1} \\
\left(\sum_{j=1}^{m}\lambda_j {\ba_j}^{\od}\otimes \ov{\ba_j}^{\od}\right) (\bullet\,\ov{\bx_k}^{d-1}{\bx_k}^{d}) &= \lambda_k \ba_k.
\end{align*}
Therefore, for every $k=1,\dots,m$,
$$
\ba_k = \frac{1}{\lambda_k}\sum_{j=1}^r \left(\prod_{i=2}^{d} \langle \ov{\bc_{ji}}, \ov{\bx_k} \rangle\right)
\left(\prod_{i=d+1}^{2d} \langle \ov{\bc_{ji}}, \bx_k \rangle\right)\bc_{j1},
$$
i.e., a complex linear combination of $\{\bc_{11},\dots,\bc_{r1}\}$. This implies that $m\le r$. Combining with the obvious fact that $r=\rank(\TT)\le \rank_\ps(\TT)=m$, we obtain that $r=m$. In other words, $\rank(\TT)=\rank_\ps(\TT)$ holds generically.
\end{proof}
We remark that the above result already holds for CPS tensors. This is because CPS tensors are PS, and PS rank of a CPS tensor is equal to CPS rank of the tensor (Theorem~\ref{thm:ranknotesame}).

\section{Rank-one approximation and matricization equivalence}\label{sec:p2}

Finding tensor ranks are in general very hard~\cite{HL13}. This makes low-rank approximations important, and in fact it has been one of the main questions for high-order tensors. Along this line, rank-one approximation is perhaps the most simple and important topic. In this section, we study several rank-one approximations and the rank-one equivalence via matricization for CPS tensors. As an application of the matricization equivalence, new convex optimization models are developed to find the best rank-one approximation of CPS tensors.

\subsection{Rank-one approximation}

It is well known that finding the best rank-one approximation of a real tensor is equivalent to finding the largest singular value~\cite{L05} of the tensor; see e.g.,~\cite{L11}. For a real symmetric tensor, the best rank-one approximation can be obtained at a symmetric rank-one tensor~\cite{B38}, and is equivalent to finding the largest eigenvalue~\cite{Q05} of the tensor. In the complex field, Ni et al.~\cite{NQB14} studied the best symmetric rank-one approximation of symmetric complex tensors. Along this line, PS and CPS tensors possess similar properties. Let us first introduce eigenvalues of these tensors.

\begin{definition} {\em (Jiang et al.~\cite[Definition 4.4]{JLZ16}).} \label{def:eigencps}
$\lambda\in\C$ is called a C-eigenvalue of a CPS tensor $\TT\in\CPS^{n^{2d}}$ if there exists a vector $\bx\in\C^n$ called C-eigenvector, such that $\TT(\bullet\,\ov\bx^{d-1}\bx^d)=\lambda\bx$ and $\|\bx\|=1$.
\end{definition}
All the C-eigenvalues of CPS tensors are real~\cite{JLZ16}. The C-eigenvalue of PS tensors has not been defined, and we can simply adopt Definition~\ref{def:eigencps} as the definition of C-eigenvalue and C-eigenvector for PS tensors. In this paper, as long as their is no ambiguity, we call C-eigenvalue and C-eigenvector to be the eigenvalue and the eigenvector for PS tensors (including CPS tensors), respectively, and call $(\lambda,\bx)$ in Definition~\ref{def:eigencps} to be the eigenpair. We also need to clarify a coupon of terms. For a CPS tensor $\TT$, if $\rank(\TT)=1$, then $\rank_\ps(\TT)=\rank_\cps(\TT)=1$, and so the term {\em rank-one CPS tensor} has no ambiguity. However, for a PS tensor $\TT$, $\rank(\TT)=1$ does not imply $\rank_\ps(\TT)=1$. Here, the term {\em rank-one PS tensor} stands for a PS tensor $\TT$ with $\rank_\ps(\TT)=1$.

\begin{theorem} \label{thm:psrankeig}
  For a PS tensor $\TT\in\PS^{n^{2d}}$, {$\lambda\in \C$} is a largest (in terms of the modulus) eigenvalue in an eigenpair $(\lambda,\bx)$ of $\TT$ if and only if $\lambda\,\bx^{\od}\otimes\ov\bx^{\od}$ is a best rank-one PS tensor approximation of $\TT$, i.e.,
  \begin{equation}\label{eq:psrankeig}
  \argmax_{\TT(\bullet\,\ov\bx^{d-1}\bx^d)=\lambda\bx,\,\|\bx\|=1,\,\lambda\in\C}|\lambda| =\argmin_{\|\bx\|=1,\,\lambda\in\C} \|\TT-\lambda\, \bx^{\od}\otimes\ov\bx^{\od}\|.
  \end{equation}
\end{theorem}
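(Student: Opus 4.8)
The plan is to first eliminate $\lambda$ from the approximation problem, reducing it to a maximization of the conjugate form on the unit sphere, and then match the maximizers of that form with the largest-modulus eigenpairs. First I would fix $\bx$ with $\|\bx\|=1$ and abbreviate $R:=\bx^{\od}\otimes\ov\bx^{\od}$. A direct index count gives $\|R\|^2=\|\bx\|^{4d}=1$ and $\langle\TT,R\rangle=\ov{\TT(\ov\bx^d\bx^d)}$, so that
\begin{equation*}
\|\TT-\lambda R\|^2=\|\TT\|^2-2\re\bigl(\ov\lambda\,\TT(\ov\bx^d\bx^d)\bigr)+|\lambda|^2=\|\TT\|^2-\bigl|\TT(\ov\bx^d\bx^d)\bigr|^2+\bigl|\lambda-\TT(\ov\bx^d\bx^d)\bigr|^2.
\end{equation*}
For each fixed $\bx$ the inner minimum over $\lambda\in\C$ is therefore attained uniquely at $\lambda=\TT(\ov\bx^d\bx^d)$, with residual $\|\TT\|^2-|\TT(\ov\bx^d\bx^d)|^2$. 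Hence the right-hand side of~\eqref{eq:psrankeig} is equivalent to $\max_{\|\bx\|=1}|\TT(\ov\bx^d\bx^d)|$, and along any minimizer the optimal multiplier is forced to equal $\TT(\ov\bx^d\bx^d)$.

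Next I would read off the eigenvalue side in the same way. Contracting the eigen-equation $\TT(\bullet\,\ov\bx^{d-1}\bx^d)=\lambda\bx$ against $\ov\bx$ in the open slot gives $\lambda=\lambda\|\bx\|^2=\TT(\ov\bx^d\bx^d)$; thus every eigenvalue equals the conjugate form evaluated at its own unit eigenvector, whence $|\lambda|\le\max_{\|\bx\|=1}|\TT(\ov\bx^d\bx^d)|$. This shows at once that the two objective values in~\eqref{eq:psrankeig} can only agree, and that the candidate multipliers on the two sides are the same quantity $\TT(\ov\bx^d\bx^d)$; what remains is to prove that the bound is tight and attained exactly at eigenvectors.

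The heart of the proof is the equivalence: a unit vector $\bx$ maximizes $|\TT(\ov\bx^d\bx^d)|$ if and only if it is an eigenvector, necessarily with eigenvalue $\TT(\ov\bx^d\bx^d)$. I would establish this through first-order optimality for $\max|\TT(\ov\bx^d\bx^d)|^2$ on $\|\bx\|^2=1$, using Wirtinger derivatives. Partial-symmetry gives $\partial\,\TT(\ov\bx^d\bx^d)/\partial\ov x_k=d\,\TT(\bullet\,\ov\bx^{d-1}\bx^d)_k$, while differentiating the conjugate factor produces a term built from $\TT^{\HH}(\bullet\,\ov\bx^{d-1}\bx^d)$, so stationarity reads
\begin{equation*}
d\,\bigl(\ov{\TT(\ov\bx^d\bx^d)}\;\TT(\bullet\,\ov\bx^{d-1}\bx^d)+\TT(\ov\bx^d\bx^d)\;\TT^{\HH}(\bullet\,\ov\bx^{d-1}\bx^d)\bigr)=\mu\,\bx,\qquad\mu\in\R.
\end{equation*}
The main obstacle is to reduce this to the eigen-equation $\TT(\bullet\,\ov\bx^{d-1}\bx^d)=\lambda\bx$, and this is precisely where the conjugate-partial-symmetric structure is indispensable: when $\TT^{\HH}=\TT$ the two contractions coincide and $\TT(\ov\bx^d\bx^d)$ is real, so the left-hand side collapses to $2d\,\TT(\ov\bx^d\bx^d)\,\TT(\bullet\,\ov\bx^{d-1}\bx^d)$, a real multiple of $\TT(\bullet\,\ov\bx^{d-1}\bx^d)$, and stationarity forces proportionality to $\bx$; contracting against $\ov\bx$ then pins $\lambda=\TT(\ov\bx^d\bx^d)$. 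I expect this collapse — together with the care needed because a maximizer is determined only up to the phase freedom $\bx\mapsto e^{\ii\theta}\bx$, which leaves $\TT(\ov\bx^d\bx^d)$ invariant — to be the delicate part of the argument; the converse inclusion, that every eigenvector is a critical point, is immediate by substitution. Compactness of the unit sphere guarantees the maximum is attained, producing a genuine eigenpair whose modulus realizes $\max_{\|\bx\|=1}|\TT(\ov\bx^d\bx^d)|$.

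Finally I would assemble the pieces. By the first step the minimizers of $\|\TT-\lambda\,\bx^{\od}\otimes\ov\bx^{\od}\|$ are exactly the maximizers of $|\TT(\ov\bx^d\bx^d)|$, paired with $\lambda=\TT(\ov\bx^d\bx^d)$; by the third step these maximizers are precisely the eigenvectors whose eigenvalue has modulus $\max_{\|\bx\|=1}|\TT(\ov\bx^d\bx^d)|$; and by the second step no eigenvalue can exceed this modulus, so such eigenpairs are exactly the largest-modulus ones. Consequently the argmax set on the left of~\eqref{eq:psrankeig} and the argmin set on the right coincide, which is the asserted equivalence: a largest-modulus eigenpair $(\lambda,\bx)$ exists and $\lambda\,\bx^{\od}\otimes\ov\bx^{\od}$ is a best rank-one approximation, and conversely.
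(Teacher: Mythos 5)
Your first two steps are correct and coincide with the paper's: eliminating $\lambda$ reduces the approximation problem to $\max_{\|\bx\|=1}|\TT(\ov\bx^d\bx^d)|$ with the optimal multiplier $\lambda=\TT(\ov\bx^d\bx^d)$, and contracting the eigen-equation against $\ov\bx$ shows every eigenvalue equals the form at its own eigenvector. The gap is in your third step. The theorem is stated for a general PS tensor $\TT\in\PS^{n^{2d}}$, yet your reduction of stationarity to the eigen-equation explicitly invokes $\TT^{\HH}=\TT$ (``the two contractions coincide and $\TT(\ov\bx^d\bx^d)$ is real''). That is the CPS condition; for a general PS tensor $\TT(\ov\bx^d\bx^d)$ is genuinely complex and $\TT^{\HH}(\bullet\,\ov\bx^{d-1}\bx^d)\ne\TT(\bullet\,\ov\bx^{d-1}\bx^d)$, so the collapse you rely on does not occur. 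What you have actually proved is Corollary~\ref{thm:cpsrankeig}, not Theorem~\ref{thm:psrankeig}. Indeed, writing $\TT=\U+\ii\V$ with $\U,\V$ CPS as in Proposition~\ref{thm:parts} and $u:=\U(\ov\bx^d\bx^d)$, $v:=\V(\ov\bx^d\bx^d)$ (both real), your own stationarity condition becomes the single vector equation
\begin{equation*}
2d\,\bigl(u\,\U(\bullet\,\ov\bx^{d-1}\bx^d)+v\,\V(\bullet\,\ov\bx^{d-1}\bx^d)\bigr)=\mu\,\bx,
\end{equation*}
which constrains only one particular \emph{real} linear combination of the two contractions. It does not force each contraction individually, nor the different combination $\TT(\bullet\,\ov\bx^{d-1}\bx^d)=\U(\bullet\,\ov\bx^{d-1}\bx^d)+\ii\,\V(\bullet\,\ov\bx^{d-1}\bx^d)$, to be proportional to $\bx$.

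This is precisely the obstruction the paper's proof is designed to circumvent: it duplicates the variable, rewriting the problem as $\max\{(\U(\ov\bx^d\bx^d))^2+(\V(\ov\by^d\by^d))^2\}$ over $\|\bx\|=\|\by\|=1$ with the coupling constraint $\|\bx-\by\|^2=0$, so that the Lagrangian produces \emph{two} separate first-order conditions, one pinning $\U(\bullet\,\ov\bx^{d-1}\bx^d)$ to a multiple of $\bx$ and the other pinning $\V(\bullet\,\ov\bx^{d-1}\bx^d)$ to a multiple of $\bx$; combining them with the factor $\ii$ then yields the eigen-equation for $\TT$. To complete your argument for PS tensors you would need either such a device or some other reason why both contractions are separately proportional to $\bx$ at a maximizer; as written, your proof establishes the result only in the CPS case.
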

\begin{proof}
Straightforward computation shows that
$$
\|\TT-\lambda\,\bx^{\od}\otimes\ov\bx^{\od}\|^2 = \|\TT\|^2 + |\lambda|^2 - 2\,\re(\ov\lambda \TT(\ov\bx^d\bx^d)).
$$
To minimize the right hand side of the above for given $\TT$ and fixed $\bx$, $\lambda\in\C$ must satisfy $\arg(\lambda)=\arg(\TT(\ov\bx^d\bx^d))$, which implies that $\re(\ov\lambda \TT(\ov\bx^d\bx^d))= |\lambda|\cdot|\TT(\ov\bx^d\bx^d)|$. We have
$$\min_{\lambda\in\C}\left( \|\TT\|^2 + |\lambda|^2 - 2|\lambda|\cdot|\TT(\ov\bx^d\bx^d)|\right)=\|\TT\|^2-|\TT(\ov\bx^d\bx^d)|^2,$$
held if and only if $|\lambda|=|\TT(\ov\bx^d\bx^d)|$. This further implies that $\lambda=\TT(\ov\bx^d\bx^d)$ is an optimal solution of the right hand side of~\eqref{eq:psrankeig}. Therefore, we arrive at
$$
  \argmin_{\|\bx\|=1,\lambda\in\C} \|\TT-\lambda\, \bx^{\od}\otimes\ov\bx^{\od}\| =\argmax_{\|\bx\|=1,\,\TT(\ov\bx^d\bx^d)=\lambda} |\TT(\ov\bx^d\bx^d)|
  =\argmax_{\|\bx\|=1,\,\TT(\ov\bx^d\bx^d)=\lambda} |\lambda|.
$$
By comparing to the left hand side of~\eqref{eq:psrankeig}, it suffices to show that
  \begin{equation}\label{eq:psrankeig2}
  \argmax_{\TT(\bullet\,\ov\bx^{d-1}\bx^d)=\lambda\bx,\,\|\bx\|=1,\,\lambda\in\C}|\lambda| =\argmax_{\|\bx\|=1,\,\TT(\ov\bx^d\bx^d)=\lambda} |\lambda|.
  \end{equation}
It is obvious that $\TT(\bullet\,\ov\bx^{d-1}\bx^d)=\lambda\bx$ implies $\TT(\ov\bx^d\bx^d)=\lambda$ by pre-multiplying $\ov\bx$ on both sides. It remains to prove that an optimal solution of the right hand side of~\eqref{eq:psrankeig2} satisfies $\TT(\bullet\,\ov\bx^{d-1}\bx^d)=\lambda\bx$, i.e., $\bx$ is an eigenvector of $\TT$.

The right hand side of~\eqref{eq:psrankeig2} is equivalent to $\max_{\|\bx\|^2=1}|\TT(\ov\bx^d\bx^d)|^2$. Let $\TT=\U+\ii\V$ where $\U,\V\in\CPS^{n^{2d}}$ as in Proposition~\ref{thm:parts}. This problem is further equivalent to
\begin{equation}\label{eq:extend}
\max_{\|\bx-\by\|^2 = 0,\, \|\bx\|^2 = 1,\,|\by\|^2 = 1} \left(\U(\ov\bx^d\bx^d)\right)^2 + \left(\V(\ov\by^d\by^d)\right)^2.
\end{equation}
Since both $\U(\ov\bx^d\bx^d)$ and $\V(\ov\by^d\by^d)$ are real, the Lagrangian function is
$$
f(\bx,\by,\gamma_1,\gamma_2,\gamma_3) = \left(\U(\ov\bx^d\bx^d)\right)^2 + \left(\V(\ov\by^d\by^d)\right)^2 + \gamma_1\|\bx-\by\|^2 + \gamma_2(1-\|\bx\|^2) + \gamma_3(1-\|\by\|^2).
$$
This provides (part of) the first-order optimality condition:
\begin{align*}
\frac{\partial L}{\partial \ov\bx} &= 2d\,\U(\ov\bx^d\bx^d)\, \U(\bullet\,\ov\bx^{d-1}\bx^d) + \gamma_1(\bx-\by)-\gamma_2\bx = 0, \\
\frac{\partial L}{\partial \ov\by} &= 2d\,\V(\ov\by^d\by^d)\, \V(\bullet\,\ov\by^{d-1}\by^d) - \gamma_1(\bx-\by)-\gamma_3\by = 0.
\end{align*}
By that $\bx=\by$ in the constraints of~\eqref{eq:extend}, the above equations lead to
\begin{align*}
\U(\bullet\,\ov\bx^{d-1}\bx^d) &=  \frac{\gamma_2\bx}{2d\,\U(\ov\bx^d\bx^d)},\\
\V(\bullet\,\ov\bx^{d-1}\bx^d) &=  \frac{\gamma_3\bx}{2d\,\V(\ov\bx^d\bx^d)},
\end{align*}
which implies that
$$
\TT(\bullet\,\ov\bx^{d-1}\bx^d) =  \U(\bullet\,\ov\bx^{d-1}\bx^d) + \ii \V(\bullet\,\ov\bx^{d-1}\bx^d)
= \left( \frac{\gamma_2}{2d\,\U(\ov\bx^d\bx^d)} + \frac{\ii \gamma_3}{2d\,\V(\ov\bx^d\bx^d)} \right)\bx.
$$
Therefore, $\bx$ is a an eigenvector of $\TT$.
\end{proof}

Since CPS tensors are PS tensors, for a best rank-one PS tensor $\lambda\,\bx^{\od}\otimes\ov\bx^{\od}$ approximation in~\eqref{eq:psrankeig}, $\lambda$ must be an eigenvalue. As all the eigenvalues of CPS tensors are real, $\lambda\,\bx^{\od}\otimes\ov\bx^{\od}$ becomes a best rank-one CPS tensor approximation. Therefore, Theorem~\ref{thm:psrankeig} immediately implies a similar result for CPS tensors.
\begin{corollary} \label{thm:cpsrankeig}
For a CPS tensor $\TT\in\CPS^{n^{2d}}$, {$\lambda\in\R$} is a largest (in terms of the absolute value) eigenvalue in an eigenpair $(\lambda,\bx)$ of $\TT$ if and only if $\lambda\,\bx^{\od}\otimes\ov\bx^{\od}$ is a best rank-one CPS tensor approximation of $\TT$, i.e.,
  $$
  \argmax_{\TT(\bullet\,\ov\bx^{d-1}\bx^d)=\lambda\bx,\,\|\bx\|=1,\,\lambda\in\R}|\lambda| =\argmin_{\|\bx\|=1,\,\lambda\in\R} \|\TT-\lambda\, \bx^{\od}\otimes\ov\bx^{\od}\|.
  $$
\end{corollary}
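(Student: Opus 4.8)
The plan is to obtain this statement as a direct specialization of Theorem~\ref{thm:psrankeig}, exploiting the two features that separate CPS tensors from general PS tensors: every eigenvalue of a CPS tensor is real, and the conjugate form $\TT(\ov\bx^d\bx^d)$ is real-valued for every $\bx$. First I would note that a CPS tensor is in particular PS, so Theorem~\ref{thm:psrankeig} applies verbatim to $\TT$ and already equates, via~\eqref{eq:psrankeig}, the largest-modulus eigenpairs with the best rank-one PS approximations, a priori allowing $\lambda\in\C$ on both sides.

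The substance of the corollary is then to check that forcing $\lambda\in\R$ changes nothing. On the approximation side, the proof of Theorem~\ref{thm:psrankeig} shows the optimal scalar for fixed $\bx$ is $\lambda=\TT(\ov\bx^d\bx^d)$; since $\TT$ is CPS, this quantity is real for every $\bx$ by the characterization recalled at the beginning of Section~\ref{sec:prep}, so the unconstrained minimizer over $\lambda\in\C$ already lies in $\R$ and the minimum over $\lambda\in\R$ coincides with it. On the eigenvalue side, all eigenvalues of a CPS tensor are real, so the feasible set of eigenpairs $(\lambda,\bx)$ with $\TT(\bullet\,\ov\bx^{d-1}\bx^d)=\lambda\bx$ and $\|\bx\|=1$ is unchanged whether $\lambda$ ranges over $\C$ or over $\R$. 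Hence both the $\argmax$ and the $\argmin$ in the corollary agree with their counterparts in~\eqref{eq:psrankeig}.

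Finally I would confirm that the best approximant is genuinely a rank-one CPS tensor rather than merely a rank-one PS tensor: setting $\ba=\ov\bx$ gives $\bx^{\od}\otimes\ov\bx^{\od}=\ov\ba^{\od}\otimes\ba^{\od}$, which is CPS by the computation in the proof of Theorem~\ref{thm:cps}, and any real multiple of a CPS tensor is again CPS. Thus $\lambda\,\bx^{\od}\otimes\ov\bx^{\od}$ with $\lambda\in\R$ is a best rank-one CPS approximation, and the claimed equivalence drops out of Theorem~\ref{thm:psrankeig}.

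I do not expect a genuine obstacle here, since the result is essentially a corollary of the PS case. The only point demanding care is the bookkeeping in the previous paragraph, namely that the two constraint sets in~\eqref{eq:psrankeig} remain identical once $\lambda$ is restricted to $\R$; this is precisely where the realness of $\TT(\ov\bx^d\bx^d)$ and of the spectrum of a CPS tensor must be invoked, and all the remaining computations are routine.
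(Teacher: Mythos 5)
Your proposal is correct and follows essentially the same route as the paper: the paper also obtains the corollary directly from Theorem~\ref{thm:psrankeig} by observing that a CPS tensor is PS and that all its eigenvalues (equivalently, the values $\TT(\ov\bx^d\bx^d)$) are real, so the optimal $\lambda$ is automatically real and the best rank-one PS approximant is already a CPS tensor. Your added bookkeeping about the constraint sets being unchanged under the restriction $\lambda\in\R$ is a slightly more explicit version of the same argument.
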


For CPS tensors, one may consider different rank-one approximation problems.
The following result is interesting, which echoes the inequivalence on ranks discussed earlier in Theorem~\ref{thm:ranknotesame}.
\begin{theorem} \label{thm:rankoneapp}
  If $\TT\in\CPS^{n^{2d}}$ is a CPS tensor with $d\ge 2$, then the best rank-one CPS tensor approximation of $\TT$ is equivalent to the best rank-one PS tensor approximation of $\TT$, but is not equivalent to the best rank-one complex tensor approximation of $\TT$, i.e.,
  \begin{equation} \label{eq:rankoneapp}
  \min_{\rank_\cps(\X)=1,\,\X\in\CPS^{n^{2d}}}\|\TT-\X\| = \min_{\rank_\ps(\X)=1,\,\X\in\PS^{n^{2d}}}\|\TT-\X\| \ge \min_{\rank(\X)=1,\,\X\in\C^{n^{2d}}}\|\TT-\X\|.
  \end{equation}
  Moreover, there exists a CPS tensor $\TT$ such that
  $$\min_{\rank_\cps(\X)=1,\,\X\in\CPS^{n^{2d}}}\|\TT-\X\| > \min_{\rank(\X)=1,\,\X\in\C^{n^{2d}}}\|\TT-\X\|.$$
\end{theorem}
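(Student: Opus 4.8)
The plan is to treat the equality, the non-strict inequality, and the strict separation in turn, relying on Theorem~\ref{thm:psrankeig} and Corollary~\ref{thm:cpsrankeig} for the first and on Example~\ref{ex:notsame} for the last. For the equality $\min_{\rank_\cps(\X)=1}\|\TT-\X\|=\min_{\rank_\ps(\X)=1}\|\TT-\X\|$ I would argue by two inclusions. Every rank-one CPS tensor $\lambda\ov\ba^{\od}\otimes\ba^{\od}$ with $\lambda\in\R$ is also a rank-one PS tensor (where $\lambda$ is merely allowed to range over $\C$), so the CPS feasible set is contained in the PS feasible set and hence $\min_\ps\le\min_\cps$. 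Conversely, by Theorem~\ref{thm:psrankeig} an optimal rank-one PS approximation of $\TT$ takes the form $\lambda\,\bx^{\od}\otimes\ov\bx^{\od}$ with $\lambda$ a largest-modulus eigenvalue of $\TT$; since $\TT$ is CPS all its eigenvalues are real, so this optimal $\lambda$ lies in $\R$ and the optimal PS rank-one tensor is already CPS, forcing $\min_\cps\le\min_\ps$. The two minima therefore coincide.

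For the inequality $\min_\ps\ge\min_{\rank(\X)=1}$ I would simply note that a nonzero rank-one PS tensor $\lambda\ov\ba^{\od}\otimes\ba^{\od}$ is an outer product of $2d$ vectors scaled by $\lambda$, hence has complex rank one; thus the PS feasible set is contained in the set of all rank-one complex tensors, and minimizing the same objective over a larger set can only lower the value.

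The strict separation is the substantive part, and I would establish it on the explicit CPS tensor $\TT\in\CPS^{2^4}$ of Example~\ref{ex:notsame}, namely $\TT_{1122}=\TT_{2211}=1$ with all other entries zero, so that $\|\TT\|^2=2$. By Corollary~\ref{thm:cpsrankeig} and the error formula in the proof of Theorem~\ref{thm:psrankeig}, the best rank-one CPS approximation has squared error $\|\TT\|^2-\max_{\|\bx\|=1}|\TT(\ov\bx^2\bx^2)|^2$. Writing $x_1=r_1e^{\ii\theta_1}$ and $x_2=r_2e^{\ii\theta_2}$ gives $\TT(\ov\bx^2\bx^2)=\ov{x_1}^2x_2^2+\ov{x_2}^2x_1^2=2r_1^2r_2^2\cos\!\big(2(\theta_2-\theta_1)\big)$, whose modulus is maximized by taking $|\cos|=1$ and $r_1^2=r_2^2=\tfrac12$, yielding the conjugate spectral value $\tfrac12$; hence the squared CPS error equals $2-\tfrac14=\tfrac74$. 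On the other hand the single rank-one complex tensor $\be_1\otimes\be_1\otimes\be_2\otimes\be_2$ matches the entry $\TT_{1122}$ and leaves residual squared norm $|\TT_{2211}|^2=1$, so the best rank-one complex approximation has squared error at most $1$. Since $\tfrac74>1$, the strict inequality follows; notably this needs only an upper bound on the ordinary best rank-one error, so the exact spectral norm of $\TT$ is never required.

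The main obstacle I anticipate is the conjugate-spectral-norm evaluation $\max_{\|\bx\|=1}|\TT(\ov\bx^2\bx^2)|=\tfrac12$: one must correctly separate the phase and modulus degrees of freedom and verify that $2r_1^2r_2^2$ is maximized at $r_1^2=r_2^2=\tfrac12$ under $r_1^2+r_2^2=1$. Everything else reduces to set inclusions and a one-line residual computation, so I expect no further difficulty.
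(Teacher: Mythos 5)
Your proposal is correct and follows essentially the same route as the paper: the equality via Theorem~\ref{thm:psrankeig}/Corollary~\ref{thm:cpsrankeig} (real eigenvalues of a CPS tensor force the optimal PS rank-one approximation to be CPS), the non-strict inequality via set inclusion, and the strict separation via the same tensor $\TT_{1122}=\TT_{2211}=1$ with the bound $|\TT(\ov\bx^2\bx^2)|\le 2|x_1|^2|x_2|^2\le\tfrac12$ against the residual $1$ of $\be_1\otimes\be_1\otimes\be_2\otimes\be_2$. Your polar-coordinate derivation of the bound is a cosmetic variant of the paper's triangle-inequality-plus-AM-GM argument in Example~\ref{ex:notsame2}.
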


In fact, the equality in~\eqref{eq:rankoneapp} is a consequence of Theorem~\ref{thm:psrankeig} and Corollary~\ref{thm:cpsrankeig}. The inequality in~\eqref{eq:rankoneapp} is obvious since $\CPS^{n^{2d}}\subset\C^{n^{2d}}$ and $\rank_\cps(\X)=1$ implies that $\rank(\X)=1$, and its strictness can be validated by the following example.
\begin{example}\label{ex:notsame2}
  Let $\TT\in\CPS^{2^4}$ where $\TT_{1122}=\TT_{2211}=1$ and other entries are zeros. For any $\bz\in\C^2$ with $\|\bz\|=1$, one has
  $$|\TT(\ov\bz^2\bz^2)|=|{\ov{z_1}}^2{z_2}^2+{\ov{z_2}}^2{z_1}^2| \le 2|z_1|^2|z_2|^2\le\frac{1}{2}(|z_1|^2+|z_2|^2)^2=\frac{1}{2},$$
  implying that
  $$
  \|\TT-\lambda\bz^{\otimes 2}\otimes \ov\bz^{\otimes 2}\|^2 =\|\TT\|^2-2\lambda\TT(\ov\bz^2\bz^2)+\lambda^2\ge \|\TT\|^2 - |\TT(\ov\bz^2\bz^2)|^2 \ge2-\frac{1}{4}=\frac{7}{4}.
  $$
  However,
  $$
  \|\TT-\be_1\otimes\be_1\otimes\be_2\otimes\be_2\|^2 =1.
  $$
  This shows that $\min_{\rank_\cps(\X)=1,\,\X\in\CPS^{2^4}}\|\TT-\X\| > \min_{\rank(\X)=1,\,\X\in\C^{2^4}}\|\TT-\X\|$.
\end{example}

We remark that the equivalence between the best rank-one CPS tensor approximation and the best rank-one complex tensor approximation actually holds for Hermitian matrices ($d=1$), i.e.,
$$
  \min_{\rank_\cps(X)=1,\,X\in\CPS^{n^2}}\|A-X\| = \min_{\rank_\ps(X)=1,\,X\in\PS^{n^2}}\|A-X\| = \min_{\rank(X)=1,\,X\in\C^{n^2}}\|A-X\|
$$
when $A\in\C^{n^2}$ is Hermitian. This is because of the trivial fact $\C^{n^2}=\PS^{n^2}$ and the equality in~\eqref{eq:rankoneapp}.

\subsection{Rank-one equivalence via matricization}

Matricization, or matrix unfolding, of a tensor is a widely used tool to study high-order tensors. When a tensor is rank-one, it is obvious that any matricization of the tensor is rank-one, while the reverse is not true in general. For an even-order symmetric tensor $\TT\in\CS^{n^{2d}}$, it is known that if its square matricization (unfolding $\TT$ as an $n^d\times n^d$ matrix) is rank-one, then the original tensor $\TT$ must be rank-one; see e.g.,~\cite{NW14, JMZ15}. In the real field, this rank-one equivalence suggests some convex optimization methods to compute the largest eigenvalue or best rank-one approximation of a symmetric tensor. In practice, the methods are very likely to find global optimal solutions~\cite{NW14,JMZ15}. Inspired by these results, let us look into the rank-one equivalence for CPS tensors.

For a CPS tensor, one hopes that its square matricization being rank-one implies the original tensor being rank-one. Unfortunately, this may not hold ture if a CPS tensor is not unfolded in a {\em right way}.
The following example shows that the standard square matricization of a non-rank-one CPS tensor turns to a rank-one Hermitian matrix.
\begin{example}\label{ex:nonrankone}
Let $\TT=\ov{A}\otimes A\in\C^{2^4}$ where
$A=\left(\begin{smallmatrix} 1 & 1 + \ii \\ 1+\ii & 2\end{smallmatrix}\right)\in\C^{2^2}$. Explicitly, $\TT$ can be written as
$$
\begin{array}{c|c}
\TT(\be_1\be_1\bullet\bullet) & \TT(\be_1\be_2\bullet\bullet) \\
\hline
\TT(\be_2\be_1\bullet\bullet) & \TT(\be_2\be_2\bullet\bullet)
\end{array}
=
\begin{array}{c|c}
\left(\begin{smallmatrix} 1 & 1+\ii \\ 1+\ii & 2\end{smallmatrix}\right) & \left(\begin{smallmatrix}1-\ii & 2 \\ 2 & 2-2\ii\end{smallmatrix}\right)  \\
\hline
\left(\begin{smallmatrix}1-\ii & 2 \\ 2 & 2-2\ii\end{smallmatrix}\right) & \left(\begin{smallmatrix} 2 & 2+2\ii \\ 2+2\ii & 4\end{smallmatrix}\right)
\end{array},
$$
which can be straightforwardly verified as a CPS tensor. However, $\rank(\TT)\ge 2$ but the standard square matricization of $\TT$ is a rank-one Hermitian matrix.
\end{example}
\begin{proof}
By the construction of $\TT$ via the outer product of two matrices, it is obvious that its standard square matricization is $(1,1+\ii,1+\ii,2)^{\HH}(1,1+\ii,1+\ii,2)$, which is a rank-one Hermitian matrix.

On the other hand, suppose that $\rank(\TT)=1$. This implies that $\rank_\cps(\TT)=1$ and so we may let $\TT=\ov{\bx}\otimes\ov{\bx}\otimes\bx\otimes\bx$ for some $\bx\in\C^2$. By comparing some entries, one has
$$
|x_1|^4=\TT_{1111}=1, \,|x_2|^4=\TT_{2222}=4, \, \ov{x_1}^2x_1x_2=\TT_{1112}=1+\ii, \mbox{ and } \ov{x_2}\ov{x_1}x_2x_2=\TT_{2122}=2-2\ii.
$$
Clearly $|x_1|^2=1$ and $|x_2|^2=2$, and this leads to
$$
2-2\ii = \ov{x_2}\ov{x_1}x_2x_2 = 2\,\ov{x_1}x_2 =  2\, \ov{x_1}^2x_1x_2 = 2 + 2\ii,
$$
a contradiction. Therefore, $\rank(\TT)\ge2$.
\end{proof}

We notice that square matricization is unique for symmetric tensors, but not for CPS tensors. Example~\ref{ex:nonrankone} motivates us to consider other ways of matricization, with a hope to establish certain rank-one equivalence. To this end, it is necessary to introduce tensor transpose, extending the concept of matrix transpose.
%
\begin{definition}\label{def:tensorp}
Given a tensor $\TT\in\C^{n^d}$ and a permutation $\pi=(\pi_1,\dots,\pi_d)\in\Pi(1,\dots,d)$, the $\pi$-transpose of $\TT$, denoted by $\TT^\pi\in\C^{n^d}$, satisfies
$$\TT_{i_1\dots i_d}=(\TT^\pi)_{i_{\pi_1} \dots i_{\pi_{d}}}  \quad\forall\, 1 \le i_1, \dots,  i_d\le n.$$
\end{definition}
In a plain language, mode $1$ of $\TT^\pi$ originates from mode $\pi_1$ of $\TT$, mode $2$ of $\TT^\pi$ originates from mode $\pi_2$ of $\TT$, and so on. As a matter of fact, for a matrix $A\in\C^{n^2}$ and $\pi=(2,1)$, $A^\pi=A^{\T}$. For a PS tensor $\TT\in\PS^{n^{2d}}$ and $\pi=(d+1,\dots,2d,1,\dots,d)$, $\TT^{\HH}=\ov{\TT^\pi}$.

Given any integers $1\le i_1,\dots,i_d\le n$, let us denote $n(i_1\dots i_d):=\sum_{k = 1}^{d}(i_k-1)n^{d-k} + 1$
to be the decimal of the tuple $i_1\dots i_d$ in the base-$n$ numeral system.
We now discuss matricization and vectorization.
\begin{definition}
Given a tensor $\TT\in\C^{n^{d}}$, the vectorization of $\TT$, denoted by $\bv(\TT)$, is an $n^{d}$-dimensional vector satisfying
$$v(\TT)_{n(i_1\dots i_d)} = \TT_{i_1\dots i_{d}} \quad \forall\, 1\le i_1,\dots ,i_{d}\le n.$$
Given an even-order tensor $\TT\in\C^{n^{2d}}$, the standard square matricization (or simply matricization) of $\TT$, denoted by $M(\TT)$, is an $n^d\times n^d$ matrix satisfying
$$M(\TT)_{n(i_1\dots i_d)\, n(i_{d+1}\dots i_{2d})} = \TT_{i_1\dots i_{2d}} \quad \forall\, 1\le i_1,\dots ,i_{2d}\le n.$$
\end{definition}

\begin{definition}\label{def:matrix}
Given an even-order tensor $\TT\in\C^{n^{2d}}$ and a permutation $\pi\in\Pi(1,\dots,2d)$, the $\pi$-matricization of $\TT$, denoted by $M_\pi(\TT)$, satisfies
$$M_\pi(\TT)_{n(i_{\pi_1}\dots i_{\pi_d})\, n(i_{\pi_{d+1}}\dots i_{\pi_{2d}})}= \TT_{i_{1} \dots i_{{2d}}} \quad\forall\, 1 \le i_1, \dots,  i_{2d}\le n,$$
in other words, $M_\pi(\TT)=M(\TT^\pi)$.
\end{definition}

Obviously the standard square matricization is a $\pi$-matricization when $\pi=(1,2,\dots,2d)$. Vectorization, $\pi$-matricization and $\pi$-transpose are all one-to-one. They are different ways of representation for tensor data. The following property on ranks are straightforward.
\begin{proposition}\label{thm:pi}
Given a tensor $\TT\in\C^{n^{2d}}$ and any $\pi\in\Pi(1,\dots,2d)$, it follows that $\rank(\TT)=\rank(\TT^\pi)$ and $\rank(M_\pi(\TT))\le \rank(\TT)$, in particular, $\rank(\TT)=1 \Longrightarrow \rank(M_\pi(\TT))=1$.
\end{proposition}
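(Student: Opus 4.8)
The plan is to treat the three assertions in order, but to observe at the outset that the middle inequality can be reduced to the case of the standard square matricization: since $M_\pi(\TT)=M(\TT^\pi)$ by Definition~\ref{def:matrix}, once the rank-invariance $\rank(\TT)=\rank(\TT^\pi)$ of the first assertion is in hand, it suffices to prove $\rank(M(\TT))\le\rank(\TT)$ for the ordinary matricization and then chain the two facts. The final assertion will drop out as an immediate specialization.

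For the first assertion I would start from a shortest rank decomposition $\TT=\sum_{j=1}^r\ba_{j1}\otimes\dots\otimes\ba_{j,2d}$ with $r=\rank(\TT)$ and simply track entrywise what Definition~\ref{def:tensorp} does to it. Writing $(\TT^\pi)_{k_1\dots k_{2d}}=\TT_{i_1\dots i_{2d}}$ with $i_m=k_{\pi^{-1}(m)}$ and relabelling the product index via $\ell=\pi^{-1}(m)$, one obtains $\TT^\pi=\sum_{j=1}^r\ba_{j\pi_1}\otimes\ba_{j\pi_2}\otimes\dots\otimes\ba_{j\pi_{2d}}$; in words, $\pi$-transpose only permutes the tensor factors within each rank-one term and leaves the number of terms untouched. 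Hence $\rank(\TT^\pi)\le\rank(\TT)$, and applying the same step to the permutation $\pi^{-1}$, for which $(\TT^\pi)^{\pi^{-1}}=\TT$, gives the reverse inequality, so $\rank(\TT)=\rank(\TT^\pi)$.

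For $\rank(M(\TT))\le\rank(\TT)$ the two key facts are that $M(\cdot)$ is a linear map on $\C^{n^{2d}}$ and that it sends a rank-one tensor to a rank-one matrix. Indeed, a direct check of the defining formula gives, for $\bb_1,\dots,\bb_{2d}\in\C^n$,
$$
M(\bb_1\otimes\dots\otimes\bb_{2d}) = \bv(\bb_1\otimes\dots\otimes\bb_d)\,\bv(\bb_{d+1}\otimes\dots\otimes\bb_{2d})^{\T},
$$
an outer product of two vectors. Applying $M$ to a shortest decomposition of $\TT$ therefore writes $M(\TT)$ as a sum of $r=\rank(\TT)$ rank-one matrices, so $\rank(M(\TT))\le\rank(\TT)$. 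Combining with the first assertion yields $\rank(M_\pi(\TT))=\rank(M(\TT^\pi))\le\rank(\TT^\pi)=\rank(\TT)$. Finally, if $\rank(\TT)=1$ then $\TT$ is a nonzero rank-one tensor, so $M_\pi(\TT)$ is a nonzero matrix with $\rank(M_\pi(\TT))\le1$, forcing $\rank(M_\pi(\TT))=1$.

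I do not expect a genuine obstacle; the only place that demands care is the index bookkeeping in the first assertion, namely getting the direction of the permutation right. Concretely, the factor occupying mode $\ell$ of $\TT^\pi$ originates from mode $\pi_\ell$ of $\TT$, which is exactly why $\pi$ (and not $\pi^{-1}$) appears in the permuted decomposition; everything else is routine linear algebra.
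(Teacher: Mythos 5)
Your proof is correct, and it fills in exactly the routine argument the paper leaves implicit: the paper states this proposition without proof, merely calling it ``straightforward.'' Your index bookkeeping for the $\pi$-transpose, the observation that $M(\cdot)$ is linear and sends rank-one tensors to outer products of vectors, and the injectivity of matricization for the final implication are all the standard steps one would expect, so there is nothing to compare against or correct.
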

As mentioned earlier, the $\pi$-matricization of a symmetric tensor is unique for any permutation $\pi$, since $\TT^\pi=\TT$ if $\TT$ is symmetric. However, CPS tensors only possess partial symmetricity as well as certain conjugate property. Therefore, conditions of $\pi$ are necessary to guarantee the rank-one equivalence, as well as for the $\pi$-matricization being Hermitian.

\begin{proposition}\label{thm:conjeq}
If $\TT\in\CPS^{n^{2d}}$ and a permutation $\pi\in\Pi(1,\dots,2d)$ satisfies
 \begin{equation} \label{eq:conjeq}
 \left|\{\pi_k,\pi_{d+k}\}\cap \{1,\dots,d\}\right|=1 \quad\forall\, k=1,\dots,d,
 \end{equation}
then $M_\pi(\TT)$ is a CPS (Hermitian) matrix.
\end{proposition}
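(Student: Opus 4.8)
The plan is to verify the Hermitian property of $M_\pi(\TT)$ directly at the level of matrix entries, tracing each entry back to an entry of $\TT$ via Definition~\ref{def:matrix} and then invoking the two defining properties of a CPS tensor. Since a CPS matrix is precisely a Hermitian matrix (the $d=1$ remark after Definition~\ref{def:cps}), it suffices to show $M_\pi(\TT)_{qp}=\ov{M_\pi(\TT)_{pq}}$ for every pair of matrix indices $p,q$. To set this up, I would fix an index tuple $(i_1,\dots,i_{2d})$ and put $p=n(i_{\pi_1}\dots i_{\pi_d})$ and $q=n(i_{\pi_{d+1}}\dots i_{\pi_{2d}})$, so that $M_\pi(\TT)_{pq}=\TT_{i_1\dots i_{2d}}$ by Definition~\ref{def:matrix}.

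The first substantive step is to express the transposed entry $M_\pi(\TT)_{qp}$ as an entry of $\TT$. Because $n(\cdot)$ is a bijection between tuples and matrix indices, one has $M_\pi(\TT)_{qp}=\TT_{j_1\dots j_{2d}}$, where $(j_1,\dots,j_{2d})$ is the unique tuple satisfying $(j_{\pi_1},\dots,j_{\pi_d})=(i_{\pi_{d+1}},\dots,i_{\pi_{2d}})$ and $(j_{\pi_{d+1}},\dots,j_{\pi_{2d}})=(i_{\pi_1},\dots,i_{\pi_d})$. Unwinding this, $(j_1,\dots,j_{2d})$ is obtained from $(i_1,\dots,i_{2d})$ by swapping the entries in positions $\pi_k$ and $\pi_{d+k}$ for each $k=1,\dots,d$, that is, $j_{\pi_k}=i_{\pi_{d+k}}$ and $j_{\pi_{d+k}}=i_{\pi_k}$.

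The combinatorial heart of the argument, and the step I expect to be the main obstacle, is extracting from condition~\eqref{eq:conjeq} that this swap interchanges the first-half and second-half multisets. The pairs $\{\pi_k,\pi_{d+k}\}$, $k=1,\dots,d$, partition $\{1,\dots,2d\}$, and~\eqref{eq:conjeq} forces each such pair to contain exactly one index from $\{1,\dots,d\}$ and exactly one from $\{d+1,\dots,2d\}$; hence these pairs realize a perfect matching between the two halves. Consequently, as a position $a\in\{1,\dots,d\}$ runs through the first half, its matched partner runs through all of $\{d+1,\dots,2d\}$, from which I would conclude the multiset identities $\{j_1,\dots,j_d\}=\{i_{d+1},\dots,i_{2d}\}$ and $\{j_{d+1},\dots,j_{2d}\}=\{i_1,\dots,i_d\}$. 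Making this bookkeeping precise—namely that ``exactly one index per half in each pair'' is equivalent to the claimed multiset swap—is the crux; everything else is routine.

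Finally I would close the loop using the two structural properties of CPS tensors. Since $\TT$ is PS, the entry $\TT_{j_1\dots j_{2d}}$ depends only on the multiset of its first $d$ indices and the multiset of its last $d$ indices, so the multiset identities above give $\TT_{j_1\dots j_{2d}}=\TT_{i_{d+1}\dots i_{2d}i_1\dots i_d}$. The conjugate property in Definition~\ref{def:cps} then yields $\TT_{i_{d+1}\dots i_{2d}i_1\dots i_d}=\ov{\TT_{i_1\dots i_d i_{d+1}\dots i_{2d}}}$, whence $M_\pi(\TT)_{qp}=\ov{\TT_{i_1\dots i_{2d}}}=\ov{M_\pi(\TT)_{pq}}$. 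As $p,q$ were arbitrary, $M_\pi(\TT)$ is Hermitian, i.e.\ a CPS matrix, completing the proof.
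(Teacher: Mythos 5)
Your proof is correct, but it takes a genuinely different route from the paper's. The paper reduces to the rank-one case by invoking Theorem~\ref{thm:cps} (every CPS tensor is a real linear combination of rank-one CPS tensors $\ov\ba^{\od}\otimes\ba^{\od}$), observes that condition~\eqref{eq:conjeq} forces $\ov{\ba_{\pi_k}}=\ba_{\pi_{d+k}}$ for each $k$, and then verifies $M(\TT^\pi)^{\HH}=M(\TT^\pi)$ through the vectorization of the outer-product factors. Your argument instead works entrywise directly from Definitions~\ref{def:ps},~\ref{def:cps} and~\ref{def:matrix}: you identify $M_\pi(\TT)_{qp}$ with $\TT_{j_1\dots j_{2d}}$ where $j$ swaps positions $\pi_k\leftrightarrow\pi_{d+k}$, show that~\eqref{eq:conjeq} makes the pairs $\{\pi_k,\pi_{d+k}\}$ a perfect matching between the two halves so that the swap exchanges the first-half and last-half index multisets, and then close with partial symmetry plus the conjugate relation. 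What your approach buys is self-containedness: it does not lean on the (nontrivial, Hilbert-identity-based) decomposition Theorem~\ref{thm:cps}, and it makes visible exactly where condition~\eqref{eq:conjeq} enters as a matching condition on positions. What the paper's approach buys is brevity once Theorem~\ref{thm:cps} is in hand, and a formulation that meshes naturally with the rank-one bookkeeping used in the subsequent results (Proposition~\ref{thm:pi}, Theorem~\ref{thm:rankeq}). One small point to make fully rigorous in your write-up: the multiset identities only give equality of entries after also invoking the permutation-invariance within each half (the PS property), which you do state, but you should note that the conjugate relation of Definition~\ref{def:cps}, stated there for sorted indices, extends to arbitrary indices precisely because of that same partial symmetry.
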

\begin{proof}
Since any CPS tensor can be written as a sum of rank-one CPS tensors (Theorem~\ref{thm:cps}), we only need to show the case $\TT$ is rank-one. Suppose that $\TT=\ba_1\otimes\dots\otimes \ba_{2d}$, where $\ba_1=\dots=\ba_d=\ov\bx$ and $\ba_{d+1}=\dots=\ba_{2d}=\bx$, and so $\TT^\pi=\ba_{\pi_1}\otimes\dots\otimes \ba_{\pi_{2d}}$.

For any $k\in\{1,\dots,d\}$, as exactly one of $\{\pi_k,\pi_{d+k}\}$ belongs to $\{1,\dots,d\}$ and the other belongs to $\{d+1,\dots,2d\}$, we have $\ov{\ba_{\pi_k}}=\ba_{\pi_{d+k}}$. Therefore,
\begin{align*}
 M(\TT^\pi)^{\HH}& = \left(\bv(\ba_{\pi_1}\otimes\dots\otimes \ba_{\pi_d}) \otimes \bv(\ba_{\pi_{d+1}}\otimes\dots\otimes \ba_{\pi_{2d}}) \right)^{\HH}\\
  & = \ov{\bv(\ba_{\pi_{d+1}}\otimes\dots\otimes \ba_{\pi_{2d}}) \otimes \bv(\ba_{\pi_1}\otimes\dots\otimes \ba_{\pi_d})} \\
  & = \bv(\ov{\ba_{\pi_{d+1}}}\otimes\dots\otimes \ov{\ba_{\pi_{2d}}}) \otimes \bv(\ov{\ba_{\pi_1}}\otimes\dots\otimes \ov{\ba_{\pi_d}}) \\
  & = \bv(\ba_{\pi_{1}}\otimes\dots\otimes \ba_{\pi_{d}}) \otimes \bv(\ba_{\pi_{d+1}}\otimes\dots\otimes \ba_{\pi_{2d}}) \\
  & = M(\TT^\pi),
\end{align*}
proving that $M_\pi(\TT)=M(\TT^\pi)$ is a Hermitian matrix.
\end{proof}

We remark that the condition of $\pi$ in~\eqref{eq:conjeq} is in fact necessary for $M_\pi(\TT)$ to be a Hermitian matrix for a general CPS tensor $\TT$. Essentially, if mode $k$ of $\TT^\pi$ originates from modes $\{1,\dots,d\}$ of $\TT$, then mode $d+k$ of $\TT^\pi$ must originate from modes $\{d+1,\dots,2d\}$ of $\TT$, and vise versa.

\begin{theorem}\label{thm:rankeq}
If $\TT\in\PS^{n^{2d}}$ and $\pi\in\Pi(1,\dots,2d)$ satisfies
\begin{equation}\label{eq:rankeq}
 \left\lfloor\frac{d}{2}\right\rfloor \le \left|\{\pi_1,\dots,\pi_{d}\}\cap \{1,\dots,d\}\right| \le \left\lceil\frac{d}{2}\right\rceil,
\end{equation}
then $\rank(M_\pi(\TT))=1 \Longrightarrow \rank(\TT)=1$.
\end{theorem}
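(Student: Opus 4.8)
The plan is to convert the hypothesis on $M_\pi(\TT)$ into a tensor identity, reduce everything to showing that two order-$d$ factor tensors are rank-one, and then extract this from the partial symmetry of $\TT$ by a ``cross-swapping'' argument. First I would dispose of the trivial case $\TT=\OO$ (then $M_\pi(\TT)$ has rank $0$, not $1$), so assume $\TT\neq\OO$. Since $M_\pi(\TT)=M(\TT^\pi)$ (Definition~\ref{def:matrix}) has rank one, write $M(\TT^\pi)=\bp\,\bq^{\T}$ and reshape $\bp,\bq\in\C^{n^d}$ into order-$d$ tensors $\U,\V\in\C^{n^d}$, so that $\TT^\pi=\U\otimes\V$; unfolding this through $\TT_{i_1\dots i_{2d}}=(\TT^\pi)_{i_{\pi_1}\dots i_{\pi_{2d}}}$ (Definition~\ref{def:tensorp}) gives
\begin{equation*}
\TT_{i_1\dots i_{2d}}=\U_{i_{\pi_1}\dots i_{\pi_d}}\,\V_{i_{\pi_{d+1}}\dots i_{\pi_{2d}}}\qquad\forall\,1\le i_1,\dots,i_{2d}\le n .
\end{equation*}
By Proposition~\ref{thm:pi} we have $\rank(\TT)=\rank(\TT^\pi)$, so it suffices to prove that both $\U$ and $\V$ are rank-one: then $\TT^\pi=\U\otimes\V$ is a rank-one order-$2d$ tensor, hence so is $\TT$.

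The tool I would use to certify that $\U$ (and likewise $\V$) is rank-one is the elementary fact that a nonzero order-$d$ tensor is rank-one if and only if each of its single-mode flattenings (mode $k$ against the remaining $d-1$ modes) has rank at most one; this follows by a short induction on the order, peeling off one mode at a time. Thus I plan to verify, mode by mode, that every mode-$k$ flattening of $\U$ is rank $\le 1$, and similarly for $\V$.

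To produce these relations, partition the modes of $\TT$: let $P=\{\pi_1,\dots,\pi_d\}$ and $Q=\{\pi_{d+1},\dots,\pi_{2d}\}$ be the modes feeding $\U$ and $\V$, and write $I_1=\{1,\dots,d\}$, $I_2=\{d+1,\dots,2d\}$. Since $|P\cap I_1|=|Q\cap I_2|$ lies in $[\lfloor d/2\rfloor,\lceil d/2\rceil]$ by~\eqref{eq:rankeq} and $|P\cap I_2|=|Q\cap I_1|=d-|P\cap I_1|$, condition~\eqref{eq:rankeq} forces all four blocks $P\cap I_1,\,P\cap I_2,\,Q\cap I_1,\,Q\cap I_2$ to be nonempty for $d\ge2$. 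Now fix a mode $k$ of $\U$, coming from some $\TT$-mode $p\in P$; if $p\in I_1$ pick any $q\in Q\cap I_1$, and if $p\in I_2$ pick any $q\in Q\cap I_2$ (both nonempty). Because $p$ and $q$ lie in the \emph{same} half, the partial symmetry of $\TT$ (Definition~\ref{def:ps}) leaves $\TT$ invariant under interchanging the indices in positions $p$ and $q$. Reading this invariance through the displayed identity, and freezing the remaining indices at arbitrary configurations $\alpha$ (for the other modes of $\U$) and $\beta$ (for the other modes of $\V$), gives
\begin{equation*}
\U^{\alpha}(s)\,\V^{\beta}(t)=\U^{\alpha}(t)\,\V^{\beta}(s)\qquad\forall\,s,t,
\end{equation*}
where $\U^{\alpha}$ is the mode-$k$ fiber of $\U$ at $\alpha$ and $\V^{\beta}$ the corresponding fiber of $\V$. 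Choosing one configuration $\beta^{\ast}$ with $\V^{\beta^{\ast}}\neq\boldsymbol{0}$ (possible since $\V\neq\OO$), this identity forces every fiber $\U^{\alpha}$ to be parallel to the single vector $\V^{\beta^{\ast}}$, so the mode-$k$ flattening of $\U$ has rank $\le 1$. Ranging over all $k$ shows $\U$ is rank-one, and the symmetric argument (now needing $P\cap I_1,P\cap I_2$ nonempty) shows $\V$ is rank-one, completing the reduction.

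The main obstacle I anticipate is the bookkeeping that upgrades the pointwise proportionality ``each pair of opposite fibers is parallel'' into the global statement ``all mode-$k$ fibers of $\U$ share one direction'': this hinges on being able to use the \emph{same} reference fiber $\V^{\beta^{\ast}}$ uniformly across all configurations $\alpha$, and on carefully tracking which $\TT$-modes feed $\U$ versus $\V$ and in which half they sit. The balance condition~\eqref{eq:rankeq} enters exactly here, guaranteeing that every mode of $\U$ and of $\V$ admits a same-half partner in the opposite factor to drive the swap; this is precisely what breaks down for the standard matricization (where $P=I_1$), in agreement with Example~\ref{ex:nonrankone}.
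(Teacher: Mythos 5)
Your proof is correct, and it gets to the conclusion by a genuinely different route than the paper's. The paper first normalizes $\pi$ to a canonical form (exploiting the partial symmetry to reorder modes), then runs an induction on $d$: it swaps all but one mode of the first factor $\X$ with same-half modes of the second factor $\Y$ simultaneously to obtain the key identity~\eqref{eq:transfer}, freezes the $\Y$-indices at a nonzero entry to peel off $\X=\ba\otimes\U$ and $\Y=\V\otimes\bb$, and recurses on $\U\otimes\V$ after verifying the residual symmetry pattern (with a separate parity check for odd and even $d$). You instead invoke the standard criterion that a nonzero order-$d$ tensor is rank-one if and only if every single-mode flattening has rank at most one, and certify each flattening with a single two-position swap between a mode of your $\U$ and a same-half partner mode of your $\V$; this avoids both the normalization of $\pi$ and the floor/ceiling bookkeeping of the induction, and the passage from pairwise fiber proportionality to a global rank bound is handled exactly as you anticipate, by comparing all fibers $\U^{\alpha}$ against one fixed nonzero reference fiber $\V^{\beta^{\ast}}$. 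The underlying mechanism---partial symmetry of $\TT$ forcing cross-proportionality between fibers of the two factors, enabled by the existence of same-half partners---is the same in both arguments, and both correctly identify why the standard matricization of Example~\ref{ex:nonrankone} escapes the conclusion. One byproduct of your organization worth flagging: all your argument actually uses is that the four blocks $P\cap I_1$, $P\cap I_2$, $Q\cap I_1$, $Q\cap I_2$ are nonempty, i.e., $1\le\left|\{\pi_1,\dots,\pi_d\}\cap\{1,\dots,d\}\right|\le d-1$, which is implied by~\eqref{eq:rankeq} but strictly weaker for $d\ge4$. That suffices for the theorem as stated, but it sits in tension with the paper's unproven remark after Theorem~\ref{thm:rankeq} (and with the ``only if'' direction of Theorem~\ref{thm:combine}) that~\eqref{eq:rankeq} is also necessary; I do not see a gap in your reasoning, so you may want to examine that remark critically rather than assume your proof must be missing something.
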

\begin{proof}
Let $M_\pi(\TT)=\bx\otimes\by$ where $\bx$ and $\by$ are $n^d$-dimensional vectors, and further let $\bx=\bv(\X)$ and $\by=\bv(\Y)$ where $\X,\Y\in\C^{n^d}$. Since
$$M(\TT^\pi)=M_\pi(\TT)=\bx\otimes\by=\bv(\X)\otimes\bv(\Y),$$
one has $\TT^\pi=\X\otimes\Y$. To prove $\rank(\TT)=1$, it suffices to show that $\rank(\X)=\rank(\Y)=1$.

The modes $\{1,\dots,d\}$ of $\TT^\pi$ are originated from modes $\{\pi_1,\dots,\pi_d\}$ of $\TT$. By~\eqref{eq:rankeq}, there are almost half (either $\left\lfloor d/2\right\rfloor$ or $\left\lceil d/2 \right\rceil$) from modes $\{1,\dots,d\}$ of $\TT$ and the remaining half from modes $\{d+1,\dots,2d\}$ of $\TT$. This is also true for modes of $\X$. To provide a clearer presentation, we may construct a permutation $\rho$ such that, the first half modes of $\X^\rho$ are from modes $\{1,\dots,d\}$ of $\TT$ and the remaining modes of $\X^\rho$ are from modes $\{d+1,\dots,2d\}$ of $\TT$. Explicitly, $\rho\in\Pi(1,\dots,d)$ needs to satisfy
\begin{equation}\label{eq:rho}
\left\{ \rho_1,\dots,\rho_{\left|\{\pi_1,\dots,\pi_{d}\}\cap \{1,\dots,d\}\right|}\right\} =
\left\{1\le k\le d: 1\le \pi_k\le d \right\}.
\end{equation}
As $\TT$ is symmetric to modes $\{1,\dots,d\}$ and to modes $\{d+1,\dots,2d\}$, respectively, the order of $\rho_k$'s in~\eqref{eq:rho} does not matter. Observing that $\rank(\X)=\rank(\X^\rho)$ and $M(\X^\rho\otimes\Y)$ is rank-one, we may without loss of generality assume that the first $\left\lceil d/2 \right\rceil$ modes of $\X$ are from modes $\{1,\dots,d\}$ of $\TT$ and the remaining $\left\lfloor d/2 \right\rfloor$ from modes $\{d+1,\dots,2d\}$ of $\TT$, and for the same reason assume that the first $\left\lfloor  d/2 \right\rfloor$ modes of $\Y$ are from modes $\{1,\dots,d\}$ of $\TT$ and the remaining $\left\lceil d/2 \right\rceil$ from modes $\{d+1,\dots,2d\}$ of $\TT$. In a nutshell, we assume without loss of generality that
$$
\pi=\left(1,\dots, \left\lceil\frac{d}{2}\right\rceil, d+1,\dots, d+ \left\lfloor\frac{d}{2}\right\rfloor, \left\lceil\frac{d}{2}\right\rceil+1, \dots, d, d+\left\lfloor\frac{d}{2}\right\rfloor+1,\dots, 2d \right).
$$

We proceed to prove $\rank(\X)=\rank(\Y)=1$ by induction on $d$, as long as $\X\otimes \Y=\TT^\pi$ is symmetric to modes
$$\I_d^1:=\left\{1,\dots,\left\lceil\frac{d}{2}\right\rceil,d+1,\dots,d+\left\lfloor\frac{d}{2}\right\rfloor\right\},$$
and to modes
$$\I_d^2:=\left\{\left\lceil\frac{d}{2}\right\rceil+1,\dots,d,d+\left\lfloor\frac{d}{2}\right\rfloor+1,\dots,2d\right\},$$
respectively.

When $d=1$, both $\X$ and $\Y$ are obviously rank-one as they are vectors. Suppose that the claim holds for $d-1$. For general $d$, we can swap all but the first mode of $\X$ with some modes of $\Y$. In particularly, modes $\left\{2,\dots, \left\lceil\frac{d}{2}\right\rceil\right\}$ of $\X$ are swapped with modes $\left\{1,\dots, \left\lceil\frac{d}{2}\right\rceil-1\right\}$ of $\Y$, respectively, and modes $\left\{\left\lceil\frac{d}{2}\right\rceil+1, \dots,d\right\}$ of $\X$ are swapped with modes $\left\{\left\lceil\frac{d}{2}\right\rceil+1,\dots,d\right\}$ of $\Y$, respectively. Consequently, one has for any $1 \le i_1, \dots,  i_{2d}\le n$,
\begin{align}\label{eq:transfer}
  \X_{i_1\dots i_d}\Y_{i_{d+1}\dots i_{2d}} =
  \X_{ i_1 i_{d+1} \dots i_{d+\lceil d/2\rceil-1} i_{d+\lceil d/2\rceil +1} \dots i_{2d} }
  \Y_{  i_2 \dots i_{\lceil d/2\rceil} i_{d+\lceil d/2\rceil} i_{\lceil d/2\rceil+1}\dots i_d }.
\end{align}
Pick any nonzero entry of $\Y$, say $\Y_{k_1\dots k_d}\neq0$. Let $(i_{d+1}, \dots ,i_{2d})=(k_1,\dots,k_d)$ in~\eqref{eq:transfer} and we have
\begin{align*}
\X_{i_1\dots i_d}\Y_{k_1\dots k_d} =
  \X_{ i_1 k_1 \dots k_{\lceil d/2\rceil-1} k_{\lceil d/2\rceil +1} \dots k_{d} }
  \Y_{  i_2 \dots i_{\lceil d/2\rceil} k_{\lceil d/2\rceil} i_{\lceil d/2\rceil+1}\dots i_d  }.
\end{align*}
By defining $\ba\in\C^n$ and $\U\in\C^{n^{d-1}}$ where
$$
a_{i_1}:=\frac{\X_{ i_1 k_1 \dots k_{\lceil d/2\rceil-1} k_{\lceil d/2\rceil +1} \dots k_{d} }}{\Y_{k_1\dots k_d}}
 \mbox{ and } \U_{i_2\dots i_d}:=\Y_{ i_2 \dots i_{\lceil d/2\rceil} k_{\lceil d/2\rceil} i_{\lceil d/2\rceil+1}\dots i_d },
$$
we obtain that $\X=\ba\otimes \U$. Similarly to~\eqref{eq:transfer}, one may swap all but the last modes of $\Y$ to some modes of $\X$ and obtain $\Y=\V\otimes \bb$ where $\V\in\C^{n^{d-1}}$ and $\bb\in\C^n$. Since $\X\otimes \Y = \ba\otimes \U \otimes \V\otimes \bb$, we observe that $\U\otimes \V$ is symmetric to modes $\left\{1,\dots,\lceil\frac{d}{2}\rceil-1,d,\dots,d+\lfloor\frac{d}{2}\rfloor-1\right\}$ and to modes $\left\{\lceil\frac{d}{2}\rceil,\dots,d-1,d+\lfloor\frac{d}{2}\rfloor,\dots,2d-2\right\}$, respectively.

If $d$ is odd, noticing that $\lceil\frac{d}{2}\rceil-1=\lceil\frac{d-1}{2}\rceil$ and $\lfloor\frac{d}{2}\rfloor=\lfloor\frac{d-1}{2}\rfloor$, we have that $\U\otimes\V$ is symmetric to modes $\I^1_{d-1}$ and to modes $\I^2_{d-1}$, respectively. By induction, we obtain $\rank(\U)=\rank(\V)=1$, proving that $\rank(\X)=\rank(\Y)=1$.

If $d$ is even, we need to consider $\V\otimes \U$ (instead of $\U\otimes\V$), which
is symmetric to modes $\left\{1,\dots,\lfloor\frac{d}{2}\rfloor,d,\dots,d+\lceil\frac{d}{2}\rceil-2\right\}$ and to modes $\left\{\lfloor\frac{d}{2}\rfloor+1,\dots,d-1,d+\lceil\frac{d}{2}\rceil-1,\dots,2d-2\right\}$, respectively. Noticing
$\lfloor\frac{d}{2}\rfloor=\lceil\frac{d-1}{2}\rceil$ and $\lceil\frac{d}{2}\rceil=\lfloor\frac{d-1}{2}\rfloor+1$, the two sets of modes are exactly $\I^1_{d-1}$ and  $\I^2_{d-1}$, respectively. By induction, we obtain $\rank(\V)=\rank(\U)=1$, and so $\rank(\X)=\rank(\Y)=1$.
\end{proof}

In fact, the condition of $\pi$ in~\eqref{eq:rankeq} is also a necessary condition for the rank-one equivalence in Theorem~\ref{thm:rankeq} for a general CPS tensor. The proof, or an explanation of a counter example, involves heavy notations and we leave it to interested readers. The key step leading to Theorem~\ref{thm:rankeq} is the identity~\eqref{eq:transfer}, which is a consequence of modes swapping due to some partial symmetricity. This is doable because, among modes $\{1,\dots,d\}$ of $\TT$, they are (almost) equally allocated to modes of $\X$ (the first half modes of $\TT^\pi$) and to modes of $\Y$ (the last half modes of $\TT^\pi$), and the same holds for modes $\{d+1,\dots,2d\}$ of $\TT$. If the number of modes of $\X$ that originate from modes $\{1,\dots,d\}$ of $\TT$ differs the number of modes of $\Y$ that originate from modes $\{1,\dots,d\}$ of $\TT$ for more than one (such as Example~\ref{ex:nonrankone} with $\pi=(1,2,3,4)$), then~\eqref{eq:transfer} cannot be obtained. This makes some modes binding, i.e., not separable to the outer product of a vector and a tensor in a lower order.

Combing Propositions~\ref{thm:pi} and~\ref{thm:conjeq}, Theorem~\ref{thm:rankeq}, and the discussion regarding the necessities, we arrive at the following result.
\begin{theorem}\label{thm:combine}
Both $M_\pi(\TT)$ is Hermitian and $\rank(M_\pi(\TT))=1 \Longleftrightarrow \rank(\TT)=1$ hold for any CPS tensor $\TT\in\CPS^{n^{2d}}$ if and only if $\pi\in\Pi(1,\dots,2d)$ satisfies both~\eqref{eq:conjeq} and~\eqref{eq:rankeq}.
\end{theorem}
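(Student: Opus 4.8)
The plan is to treat this statement as a synthesis of the three preparatory results together with the two necessity remarks, splitting the biconditional into a sufficiency direction and a necessity direction. The statement bundles two distinct properties of $M_\pi(\TT)$ — that it is Hermitian, and that $\rank(M_\pi(\TT))=1\Longleftrightarrow\rank(\TT)=1$ — and the observation that drives the whole argument is that these two properties are controlled by the two conditions essentially independently: being Hermitian is governed solely by~\eqref{eq:conjeq}, while the rank-one equivalence is governed solely by~\eqref{eq:rankeq}.

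For the sufficiency direction I would assume $\pi$ satisfies both~\eqref{eq:conjeq} and~\eqref{eq:rankeq} and simply assemble the pieces. First, since $\pi$ satisfies~\eqref{eq:conjeq}, Proposition~\ref{thm:conjeq} yields that $M_\pi(\TT)$ is Hermitian. Next, the forward implication $\rank(\TT)=1\Rightarrow\rank(M_\pi(\TT))=1$ holds for every permutation by Proposition~\ref{thm:pi}, so it needs no hypothesis on $\pi$. Finally, because a CPS tensor is in particular PS and $\pi$ satisfies~\eqref{eq:rankeq}, Theorem~\ref{thm:rankeq} supplies the reverse implication $\rank(M_\pi(\TT))=1\Rightarrow\rank(\TT)=1$. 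Combining the last two gives the equivalence, completing this direction.

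For the necessity direction I would argue by contraposition, handling the two conditions separately so that the failure of either one already destroys one of the two asserted properties. If~\eqref{eq:conjeq} fails, the necessity statement recorded in the remark after Proposition~\ref{thm:conjeq} produces a CPS tensor whose $\pi$-matricization is not Hermitian, so the Hermitian property fails. If instead~\eqref{eq:rankeq} fails, the necessity statement recorded in the remark after Theorem~\ref{thm:rankeq} produces a CPS tensor $\TT$ with $\rank(M_\pi(\TT))=1$ yet $\rank(\TT)\ge 2$; since the forward implication of the equivalence is automatic, this breaks its reverse implication and hence the equivalence. It is worth noting that Example~\ref{ex:nonrankone} already realizes this second failure concretely, for $d=2$ and $\pi=(1,2,3,4)$, a permutation that satisfies~\eqref{eq:conjeq} but violates~\eqref{eq:rankeq}; this confirms that the two conditions cannot be merged and that~\eqref{eq:rankeq} is genuinely needed even when~\eqref{eq:conjeq} holds.

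The combined theorem itself carries little new difficulty, since every quantitative ingredient already lives in Propositions~\ref{thm:pi} and~\ref{thm:conjeq}, Theorem~\ref{thm:rankeq}, and the two necessity remarks. The real work — and the expected main obstacle — is therefore inherited from the necessity claims, specifically the general counterexample construction showing that a permutation violating~\eqref{eq:rankeq} forces some mode to become \emph{binding} (non-separable into an outer product of a vector with a lower-order factor), as flagged in the discussion following Theorem~\ref{thm:rankeq}. In writing up the synthesis I would be careful about one subtle logical point: because $\rank(\TT)=1\Rightarrow\rank(M_\pi(\TT))=1$ is unconditional, the entire content of the rank-one equivalence reduces to its reverse direction, so both the sufficiency appeal to Theorem~\ref{thm:rankeq} and the necessity appeal to its remark should be phrased in terms of that reverse implication alone.
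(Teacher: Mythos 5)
Your proposal is correct and follows essentially the same route as the paper, which proves Theorem~\ref{thm:combine} simply by combining Propositions~\ref{thm:pi} and~\ref{thm:conjeq}, Theorem~\ref{thm:rankeq}, and the necessity remarks following those results (the paper likewise defers the explicit counterexample for the necessity of~\eqref{eq:rankeq} to the reader, so your reliance on that remark matches the paper's own level of detail). Your observation that Example~\ref{ex:nonrankone} with $\pi=(1,2,3,4)$ satisfies~\eqref{eq:conjeq} but violates~\eqref{eq:rankeq} is accurate and consistent with the paper's discussion.
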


In practice, such as the discussion in modelling (Section~\ref{sec:method}) and the numerical experiments (Section~\ref{sec:numerical}), we may focus on a particular permutation satisfying~\eqref{eq:conjeq} and~\eqref{eq:rankeq}. The most straightforward one is
\begin{equation} \label{eq:bestpi}
  \pi=\left(1,\dots,\left\lceil\frac{d}{2}\right\rceil, d+1,\dots,d+\left\lfloor\frac{d}{2}\right\rfloor, d+\left\lfloor\frac{d}{2}\right\rfloor+1,\dots,2d, \left\lceil\frac{d}{2}\right\rceil+1,\dots,d \right).
\end{equation}
In particular, $\pi=(1,3,4,2)$ for fourth-order tensors ($d=2$), $\pi=(1,2,4,5,6,3)$ for sixth-order tensors ($d=3$), and $\pi=(1,2,5,6,7,8,3,4)$ for eighth-order tensors ($d=4$).

\subsection{Finding best rank-one approximation}\label{sec:method}

As an immediate application of the rank-one equivalence, we now discuss how it can be used to find the best rank-one approximation of CPS tensors. Specifically, we consider the problem
$$\min_{\|\bx\|=1,\lambda\in\R} \|\TT-\lambda\, \bx^{\od}\otimes \ov\bx^{\od}\|.$$
As mentioned in Corollary~\ref{thm:cpsrankeig},
$$\min_{\|\bx\|=1,\lambda\in\R} \|\TT-\lambda\, \bx^{\od}\otimes \ov\bx^{\od} \| \Longleftrightarrow
\max_{ \TT(\bullet\,\ov\bx^{d-1}\bx^d)=\lambda\bx,\,\|\bx\|=1,\,\lambda\in\R}|\lambda|
\Longleftrightarrow \max_{\|\bx\|=1}|\TT(\ov\bx^d\bx^d)|.$$
Since $\TT(\ov\bx^d\bx^d)$ is a real-valued function, the maximum of $|\TT(\ov\bx^d\bx^d)|$ is obtained either at $\TT(\ov\bx^d\bx^d)$ or at $(-\TT)(\ov\bx^d\bx^d)$ for a given $\TT$. Therefore, the above problem is essentially
\begin{equation} \label{eq:modeltensor}
  \max_{\|\bx\|=1}\TT(\ov\bx^d\bx^d),
\end{equation}
i.e., finding the largest eigenvalue of the CPS tensor $\TT$.


The model~\eqref{eq:modeltensor} is NP-hard when the order of $\TT$ is larger than two, even in the real field~\cite{HLZ10,HL13}. Let us now transfer the tensor based optimization model to a matrix optimization model.
\begin{theorem} \label{thm:equalmodel}
  If $\TT\in\CPS^{n^{2d}}$ and $\pi$ satisfies~\eqref{eq:conjeq} and~\eqref{eq:rankeq}, then~\eqref{eq:modeltensor} is equivalent to
\begin{equation} \label{eq:modelmatrix}
  \max\left\{\langle \ov{M_\pi(\TT)}, X \rangle: \tr(X)=1,\, \rank(X)=1,\, X\in M_\pi(\CPS^{n^{2d}}), \, X^{\HH}=X  \right\},
\end{equation}
where $M_\pi(\CPS^{n^{2d}}):=\left\{M_\pi(\X): \X\in\CPS^{n^{2d}} \right\}$.
\end{theorem}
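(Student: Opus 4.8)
The plan is to prove the two problems have the same optimal value by setting up a value-preserving correspondence between their feasible sets, with the rank-one equivalence of Theorem~\ref{thm:combine} doing the heavy lifting. The guiding observation is that $M_\pi$ and the $\pi$-transpose are bijective reindexings that leave the Frobenius inner product unchanged, so that $\langle\ov{M_\pi(\TT)},M_\pi(\X)\rangle=\langle\ov{\TT^\pi},\X^\pi\rangle=\langle\ov\TT,\X\rangle$ for every $\X$; specializing to $\X=\ov\bx^{\od}\otimes\bx^{\od}$ turns the matrix objective into $\TT(\ov\bx^d\bx^d)$, which is exactly the tensor objective. So the whole proof reduces to matching feasible points across the two formulations.

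For the forward inclusion I would send a unit vector $\bx$ to the matrix $X:=M_\pi(\ov\bx^{\od}\otimes\bx^{\od})$ and check it is feasible for~\eqref{eq:modelmatrix}. By construction $X\in M_\pi(\CPS^{n^{2d}})$, and Proposition~\ref{thm:conjeq} makes it Hermitian; in fact the computation in that proof shows $X=\bz\bz^{\HH}$ with $\bz=\bv(\W)$, where $\W$ is the outer product of $\lceil d/2\rceil$ copies of $\ov\bx$ and $\lfloor d/2\rfloor$ copies of $\bx$ prescribed by~\eqref{eq:rankeq}. This makes $X$ rank-one and positive semidefinite, and a short norm count gives $\tr(X)=\|\bz\|^2=\|\bx\|^{2d}=1$, so $X$ is feasible and carries the same objective value. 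This direction is essentially a direct construction and yields that the optimal value of~\eqref{eq:modeltensor} is at most that of~\eqref{eq:modelmatrix}.

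The reverse inclusion is where I expect the real content to sit. Given a feasible $X$, I would write $X=M_\pi(\Y)$ with $\Y\in\CPS^{n^{2d}}$ and note that a rank-one Hermitian matrix has a single nonzero real eigenvalue equal to its trace, so $\tr(X)=1$ forces $X=\bz\bz^{\HH}$ with $\|\bz\|=1$. Because $\pi$ obeys~\eqref{eq:conjeq} and~\eqref{eq:rankeq}, Theorem~\ref{thm:combine} (through Theorem~\ref{thm:rankeq}) converts $\rank(X)=1$ into $\rank(\Y)=1$, hence $\rank_\cps(\Y)=1$ and $\Y=\mu\,\ov\ba^{\od}\otimes\ba^{\od}$ for some real $\mu$ and $\ba\in\C^n$. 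The hard part will be tracking normalizations: I would feed this form back through the forward computation to get $\tr(X)=\mu\|\ba\|^{2d}$, conclude $\mu>0$ from $\tr(X)=1$, and set $\bx:=\ba/\|\ba\|$ to recover $\Y=\ov\bx^{\od}\otimes\bx^{\od}$ with $\|\bx\|=1$. The objective of $X$ then equals $\langle\ov\TT,\Y\rangle=\TT(\ov\bx^d\bx^d)$, closing the loop and giving the matching reverse inequality.

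The main obstacle is thus conceptual rather than computational — it is precisely the reverse step, which requires both hypotheses on $\pi$, since without them a rank-one $M_\pi(\Y)$ need not come from a rank-one (let alone a clean rank-one CPS) tensor $\Y$, as Example~\ref{ex:nonrankone} warns. Everything else is bookkeeping: verifying that the trace normalization of $X$ matches the unit-norm normalization of $\bx$ through the exponent $\|\bx\|^{2d}$, and that the Frobenius inner product is genuinely invariant under the $\pi$-transpose, both of which become routine once the rank-one structure has been pinned down.
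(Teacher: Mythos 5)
Your proposal is correct and follows essentially the same route as the paper's proof: both directions are handled via the correspondence $X=M_\pi(\ov\bx^{\od}\otimes\bx^{\od})$, with Propositions~\ref{thm:pi} and~\ref{thm:conjeq} giving feasibility in the forward direction and Theorem~\ref{thm:rankeq} converting $\rank(X)=1$ into a rank-one CPS tensor in the reverse direction, followed by the same trace/normalization bookkeeping. The only cosmetic difference is that you make the Frobenius-inner-product invariance under reindexing explicit, which the paper uses implicitly.
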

\begin{proof}
The equivalence between~\eqref{eq:modelmatrix} and~\eqref{eq:modeltensor} can be established via $X=M_\pi(\ov\bx^{\od}\otimes \bx^{\od})$, where $X$ and $\bx$ are feasible solutions of~\eqref{eq:modelmatrix} and~\eqref{eq:modeltensor}, respectively.

Given an optimal solution $\bz$ of~\eqref{eq:modeltensor}, by Propositions~\ref{thm:pi} and~\ref{thm:conjeq}, $Z=M_\pi(\ov\bz^{\od}\otimes \bz^{\od})$ is a rank-one Hermitian matrix, and so $Z=\ov\by\otimes\by$ where $\by$ is an $n^d$-dimensional vector. Moreover,
\begin{equation} \label{eq:traceone}
\tr(Z)=\langle I, \ov\by\otimes\by \rangle = \|\by\|^2=\|Z\|=\|\ov\bz^{\od}\otimes \bz^{\od}\|=\|\bz\|^{2d}=1.
\end{equation}
This shows that $Z$ is a feasible solution of~\eqref{eq:modelmatrix}, whose objective value is
$$
\langle \ov{M_\pi(\TT)}, Z\rangle = \langle M_\pi(\ov{\TT}), M_\pi(\ov\bz^{\od}\otimes \bz^{\od})\rangle
= \langle \ov\TT, \ov\bz^{\od}\otimes \bz^{\od}\rangle = \TT(\ov\bz^d\bz^d).
$$

On the other hand, given an optimal solution $Z$ of~\eqref{eq:modelmatrix}, let $\Z\in\CPS^{n^{2d}}$ such that $Z=M_\pi(\Z)$. As $Z$ is a rank-one Hermitian matrix, $Z=\alpha\ov\by\otimes\by$ for some $\alpha\in\R$ and $\|\by\|=1$. Further by $\tr(Z)=1$ and~\eqref{eq:traceone}, we observe that $\alpha=1$ and so $Z=\ov\by\otimes\by$. Moreover, by Theorem~\ref{thm:rankeq}, $\Z$ is a rank-one CPS tensor, i.e., $\Z=\lambda \,\ov\bz^{\od}\otimes \bz^{\od}$ for some $\lambda\in\R$ and $\|\bz\|=1$. Noticing that
$$
\ov\by\otimes\by = Z = M_\pi(\Z) = M_\pi(\lambda \,\ov\bz^{\od}\otimes \bz^{\od}) \mbox{ and } \|\by\|=\|\bz\|=1,
$$
it is easy to see that $\lambda=1$, resulting $Z=M_\pi(\ov\bz^{\od}\otimes \bz^{\od})$. Therefore, $\bz$ is a feasible solution of~\eqref{eq:modeltensor}, whose objective value is
$$
\TT(\ov\bz^d\bz^d) = \langle \ov\TT, \ov\bz^{\od}\otimes \bz^{\od}\rangle =
\langle M_\pi(\ov{\TT}), M_\pi(\ov\bz^{\od}\otimes \bz^{\od})\rangle = \langle \ov{M_\pi(\TT)}, Z\rangle.
$$
\end{proof}


We remark that both $\tr(X)=1$ and $X\in M_\pi(\CPS^{n^{2d}})$ in the model~\eqref{eq:modelmatrix} are linear equality constraints. In particular, $X\in M_\pi(\CPS^{n^{2d}})$ contains $O(n^d)$ equalities, which are the requirements of partial symmetricity and conjugate property for CPS tensors. As an example, when $n=d=2$ and let $\pi=(1,3,4,2)$ as in~\eqref{eq:bestpi}, $X\in M_\pi(\CPS^{2^{4}})$ can be explicitly written as
$$
X_{14}=X_{22}= X_{33}=X_{41},\, X_{12}=X_{31},\, X_{24}=X_{43}, \mbox{ and } X^{\HH}=X.
$$
In fact, $X^{\HH}=X$ is included in the constraints $X\in M_\pi(\CPS^{n^{2d}})$, but we leave it in~\eqref{eq:modelmatrix} to emphasize that the decision variable sits in the space of Hermitian matrices.

The problem~\eqref{eq:modelmatrix} remains hard because of the rank-one constraint. However, it broadens ways by resorting to various matrix optimization tools, particularly in convex optimization. We now propose two convex relaxation methods. First, in the proof of Theorem~\ref{thm:equalmodel}, we observe that $X$ is a rank-one Hermitian matrix with $\tr(X)=1$ actually implies that $X$ is positive semidefinite. By dropping the rank-one constraint,~\eqref{eq:modelmatrix} is relaxed to a semidefinite program (SDP):
\begin{equation}\label{eq:sdp}
  \max\left\{\langle \ov{M_\pi(\TT)}, X \rangle: \tr(X)=1,\, X\in M_\pi(\CPS^{n^{2d}}), \, X\succeq O  \right\},
\end{equation}
where $X\succeq O$ denotes that $X$ is Hermtian positive semidefinite. The convex optimization model~\eqref{eq:sdp} can be easily solved by some SDP solvers in CVX~\cite{GB17}. Alternatively, one may resort to first order methods such as the alternating direction method of multipliers (ADMM).

The second relaxation method is to add a penalty of the nuclear norm of the decision matrix in the objective function~\cite{JMZ15}. By dropping the rank-one constraint, this leads to the following convex optimization model
\begin{equation}\label{eq:nuclear}
  \max\left\{\langle \ov{M_\pi(\TT)}, X \rangle-\rho\|X\|_*: \tr(X)=1,\, X\in M_\pi(\CPS^{n^{2d}}), \, X^{\HH}=X  \right\},
\end{equation}
where $\rho>0$ is a penalty parameter and $\|X\|_*$ denotes the nuclear norm of $X$, a convex surrogate for $\rank(X)$. To see why~\eqref{eq:nuclear} is a convex relaxation of~\eqref{eq:modelmatrix}, we notice that $\|X\|_*$ is a convex function, and so the objective of~\eqref{eq:nuclear} is concave. Moreover, an optimal solution of~\eqref{eq:modelmatrix}, say $X$, is rank-one and $\tr(X) = 1$ imply that $X$ is positive semidefinite. Thus, $\|X\|_* = \tr(X) = 1$, which implies that the term $-\rho\|X\|_*$ added to the objective function is actually a constant.

Our observations in several numerical examples show that the solution obtained by the two convex relaxation models~\eqref{eq:sdp} and~\eqref{eq:nuclear} are often rank-one (see Section~\ref{sec:numerical}). Once a rank-one solution $X$ is obtained, one may resort $X=M_\pi(\ov\bx^{\od}\otimes \bx^{\od})$ to find a solution $\bx$ for~\eqref{eq:modeltensor}, as stipulated in the proof of Theorem~\ref{thm:equalmodel}.

\section{Numerical experiments}\label{sec:numerical}

In this section, we conduct numerical experiments to test the methods proposed in Section~\ref{sec:method} in finding the best rank-one approximation of CPS tensors. This is also to justify applicability of the rank-one equivalence in Theorem~\ref{thm:rankeq} or Theorem~\ref{thm:combine}. Both the nuclear norm penalty model~\eqref{eq:nuclear} and the SDP relaxation method~\eqref{eq:sdp} are applied to solve three types of instances. Interestingly, both methods are able to return rank-one solutions for almost all the test instances, and thus guarantee the optimality of the original problem~\eqref{eq:modeltensor}. In case a rank-one solution fails to obtain, one can slightly perturb the original tensor to lead a success (see Example~\ref{ex:pertube}). All the numerical experiments are conducted using an Intel Core i5-4200M 2.5GHz computer with 4GB of RAM. The supporting software is MATLAB R2015a. To solve the convex optimization problems, CVX 2.1~\cite{GB17} and the ADMM approach in~\cite{JMZ15} are called.

\subsection{Quartic minimization from radar wave form design}

In radar system, one always regulates the interference power produced by unwanted returns through controlling the range-Doppler response~\cite{ADJZ13}. It is important to design a suitable radar waveform minimizing the disturbance power at the output of the matched filter. This can be written as
$$
\phi(\bs)=\sum_{r=0}^{n-1} \sum_{j=1}^{m} \rho(r,k) \left|\bs^{\HH}J^r (\bs\odot\bp(x_j))\right|^2,
$$
where $J^r\in\R^{n^2}$ is the shifted matrix for $r\in\{0,1,\dots,n-1\}$, $\odot$ denotes the Hadamard product, $\bp(v)=(1,e^{\ii 2\pi v},\dots, e^{\ii 2(n-1)\pi v})^{\T}$, and $\rho(r,k) = \sum_{k=1}^{n_0}{\delta_{r,r_k}}{1}_{\Delta_k}(j)\frac{\sigma_k^2}{|\Delta_k|}$
with $\delta_{r,r_k}$ being the Kronecker delta and ${1}_{\Delta_k}(j)$ being an indicator function for the index set $\Delta_k$ of discrete frequencies. Interested readers are referred to~\cite{ADJZ13} for more details of the ambiguity function and radar waveform design.

To account for the finite energy transmitted by the radar it is assumed that $\|\bs\|^2 = 1$ and a similarity constraint,
$\| \bs - \bs_0 \|^2 \le \gamma$,
needs to be enforced to obtain phase-only modulated waveforms, where $\bs_0$ is a known code sharing some nice properties. Noticing that $\|\bs\|=1$ and $\bs_0$ is known, this similarity constraint can be realized by penalizing the quantity $-|\bs^{\HH}\bs_0|$ in the objective function $\phi(\bs)$. Therefore, the following quartic minimization problem is arrived (see~\cite{JLZ16} for a detail discussion on the modelling):
\begin{equation}\label{eq:radar}
  \min_{\|\bs\| = 1} \left(\phi(\bs) - \rho |\bs^{\HH}\bs_0|^2 \| \bs\|^2\right)
\end{equation}
with a penalty parameter $\rho>0$. The objective function of~\eqref{eq:radar} is a real-valued quartic conjugate form, i.e., there is a fourth-order CPS tensor $\TT$ such that $\TT(\ov\bs^2\bs^2)=\phi(\bs) - \rho |\bs^{\HH}\bs_0|^2 \| \bs\|^2$. This shows that~\eqref{eq:radar} is an instance of~\eqref{eq:modeltensor}, which is equivalent to~\eqref{eq:modelmatrix}.


We use the data considered in~\cite{ADJZ13} to construct $\phi(\bs)$ and let $\rho=30$ in~\eqref{eq:radar}. To obtain phase-only modulated waveforms, a known code $\bs_0$ (see e.g.,~\cite{HSL10}) with $|(s_0)_i|=1$ for $i=1,\dots,n$ is chosen and further normalized such that $\|\bs_0\|=1$. The problem is solved by the nuclear norm penalty model~\eqref{eq:nuclear} and the SDP relaxation method~\eqref{eq:sdp}, respectively. In the experiment, we randomly generate $\bs_0$ for 100 instances and record the number of instances that the corresponding method outputs rank-one solutions in Table~\ref{tab2}. The convex relaxation models are solved by the ADMM algorithm in~\cite{JMZ15}, whose average CPU time (in seconds) is also reported. Observed in Table~\ref{tab2}, both convex relaxation methods always obtain rank-one solutions, leading to optimal solutions of~\eqref{eq:radar}. In terms of the speed, nuclear norm penalty method runs generally faster than SDP relaxations.

\begin{table}[h]
\caption{Efficiency for the radar wave form design}
\label{tab2}
\centering
\begin{tabular}{|c|c|c|c|c|}
\hline
$n$ & \multicolumn{2}{|c|}{Nuclear norm penalty~\eqref{eq:nuclear}}&  \multicolumn{2}{|c|}{SDP relaxation~\eqref{eq:sdp}} \\ \hline
   & rank-one & CPU & rank-one & CPU \\ \hline
 5 & 100 \% & 0.371  & 100 \% & 0.693  \\
10 & 100 \% & 4.552  & 100 \% &11.261 \\
\hline
\end{tabular}
\end{table}

\subsection{Randomly generated CPS tensors}

The data from~\eqref{eq:radar} has its own structure. In this part, we test the two relaxation methods extensively using randomly generated CPS tensors. The aim is to check the chance of getting rank-one solutions and hence generating optimal solutions for the largest eigenvalue problem~\eqref{eq:modeltensor}, under the tractability of solving the two convex relaxation models.
These CPS tensors are generated as follows. First, we randomly generate two real tensors $\U,\V\in\R^{n^4}$ whose entries follow i.i.d.\ standard normal distributions, independently. We then let $\W=\U+\ii\V$ to define a complex tensor in $\C^{n^4}$. To make it being PS, we further let $\X\in\PS^{n^4}$ where
$$
\X_{ijk\ell}=\frac{1}{4}\left(\W_{ijk\ell}+\W_{jik\ell}+\W_{ij\ell k}+\W_{ji\ell k}\right) \quad\forall\,1\le i,j,k,\ell\le n.
$$
Finally, to make it being CPS, we let $\TT=\frac{1}{2}(\X+\X^{\HH})\in\CPS^{n^4}$.

For various $n\le 15$, 100 random CPS tensor instances are generated. We then solve the two convex relaxation models~\eqref{eq:nuclear} and~\eqref{eq:sdp} using the ADMM algorithm, and record the number of instances that produce rank-one solutions. The results are shown in Table~\ref{table2} together with the average CPU time (in seconds). It shows that both the nuclear norm penalty method and the SDP relaxation model~\eqref{eq:sdp} are able to generate rank-one solutions for most randomly generated instances, and thus find the largest eigenvalue of CPS tensors. Opposite to the data of radar wave form design, SDP relaxation outperforms nuclear norm penalty method, both in speed and in the chance of optimality when the dimension of the problem increases.

\begin{table}[h]
	\caption{Efficiency for largest eigenvalue of random CPS tensors}
	\label{table2}
	\centering
\begin{tabular}{|c|c|c|c|c|}
\hline
$n$ & \multicolumn{2}{|c|}{Nuclear norm penalty~\eqref{eq:nuclear}}&  \multicolumn{2}{|c|}{SDP relaxation~\eqref{eq:sdp}} \\ \hline
   & rank-one & CPU & rank-one & CPU \\ \hline
		4  & 100 \% &0.231  & 100 \% & 0.083 \\
		6  & 100 \% &1.434  & 100 \% & 0.473 \\
		8  & 100 \% &5.570  & 100 \% & 1.824 \\
		9  & 100 \% &11.414 & 100 \% & 3.519 \\
		10 & 100 \% &19.339 & 100 \% & 5.740 \\
		12 & 99  \% &56.299 & 99 \%& 16.050 \\
		15 & 45  \% &206.879 & 82 \%& 90.184 \\
		\hline
	\end{tabular}
\end{table}

\subsection{Computing largest US-eigenvalues}

Motivated by the geometric measure of quantum entanglement, Ni et al.~\cite{NQB14} introduced the notion of unitary symmetric eigenvalue (US-eigenvalue) and unitary symmetric eigenvector (US-eigenvector). The geometric measure of entanglement has various applications such as entanglement witnesses and quantum computation. The US-eigenvalues and US-eigenvectors reflect some specific states of the composite quantum system to certain extent. Specifically, $\lambda\in\C$ is a US-eigenvalue associated with a US-eigenvector $\bx\in\C^n$ of a symmetric tensor $\Z\in\C^{n^d}$ if
$$
\ov\Z(\bullet\,\bx^{d-1})=\lambda\,\ov\bx,\,\Z(\bullet,\ov\bx^{d-1})=\lambda\,\bx,\mbox{ and }\|\bx\|=1.
$$
It is known that US-eigenvalues must be real. Jiang et.\ al.~\cite{JLZ16} showed that $\lambda\in \R$ is a US-eigenvalue of a symmetric tensor $\Z\in \C^{n^d}$ if and only if $\lambda^2$ is a C-eigenvalue of the CPS tensor $\Z\otimes\ov\Z\in\C^{n^{2d}}$ and their eigenvectors are closely related. Therefore, we may resort the model~\eqref{eq:modeltensor} to find the largest US-eigenvalue with its corresponding eigenvectors for a symmetric tensor $\Z$, i.e., to solve
\begin{equation} \label{eq:useigen}
  \max_{\|\bx\|=1}(\Z\otimes\ov\Z)(\ov\bx^{d}\bx^d).
\end{equation}

In this tests, we look into the two examples in~\cite{NQB14}. We first transfer the largest US-eigenvalue problem to~\eqref{eq:useigen}, and then use the SDP relaxation model~\eqref{eq:sdp}. The hope is to find rank-one solutions and hence to obtain the largest US-eigenvalue with its corresponding eigenvectors.

\begin{example}{\em (\cite[Table 1]{NQB14}).}
Let a symmetric tensor $\Z\in\CS^{2^3}$ have entries $\Z_{111}=2$, $\Z_{112}=\Z_{121}=\Z_{211}=1$, $\Z_{122}=\Z_{212}=\Z_{221}=-1$, $\Z_{222}=1$, and others being zeros.
\end{example}
By applying the SDP relaxation method to~\eqref{eq:useigen} and solving it using CVX, we directly generate a rank-one solution. In other words, we obtain a C-eigenpair $(\lambda^2,\bx)$ with $\lambda\in\R$ of the CPS tensor $\Z\otimes\ov\Z$, i.e.,
$$
\lambda^2 = (\Z\otimes\ov\Z)(\ov\bx^3\bx^3)= \langle \ov\Z\otimes\Z, \ov\bx^{\otimes 3}\otimes\bx^{\otimes 3} \rangle =|\ov\Z(\bx^3)|^2.
$$
However, $\ov\Z(\bx^3)$ may not be real. This can be easily done by rotating $\bx$. In particular, by letting $\bz=e^{-\ii\theta/3}\bx$ where $\theta=\arg(\ov\Z(\bx^3))$, one has $\ov\Z(\bz^3) = e^{-\ii\theta} \ov\Z(\bx^3) \in\R$. This implies that $(\lambda,\bz)$ is the corresponding US-eigenpair of $\Z$. In this example, it recovers the largest US-eigenvalue $2.3547$ with its corresponding US-eigenvector $(0.9726,0.2326)^{\T}$.

\begin{example} {\em (\cite[Table 2]{NQB14}).} \label{ex:pertube}
Let a symmetric tensor $\Z\in\CS^{2^3}$ have entries $\Z_{111}=2$, $\Z_{112}=\Z_{121}=\Z_{211}=-1$, $\Z_{122}=\Z_{212}=\Z_{221}=-2$, $\Z_{222}=1$, and others being zeros.
\end{example}
We again consider the SDP relaxation method and resort to CVX for a solution. Unfortunately, it fails to give us a rank-one solution. Motivated by the high frequency of rank-one solutions obtained when solving randomly generated tensors as shown in Table~\ref{table2}, we now add a tiny random perturbation $\E\in\CS^{2^3}$ with $\|\E\|=10^{-4}$ to the original tensor $\Z$. The hope is to generate a rank-one solution via SDP relaxation while keeping the original US-eigenpair almost unchanged since $\E$ is small enough. Furthermore, the largest US-eigenvalue may have more than one US-eigenvectors, i.e.,~\eqref{eq:useigen} admits multiple global optimal solutions. In our experiments, we observe that adding tiny perturbations not only obtains a rank-one solution, but also helps to generate different rank-one solutions under different perturbations. Using this approach, we successfully obtain the largest US-eigenvalue $3.1623$ and its four US-eigenvectors: $(0.6987+0.1088\,\ii,-0.1088+0.6987\,\ii)^{\T}$, $(0.6987-0.1088\,\ii,-0.1088-0.6987\,\ii)^{\T}$, $(-0.2551+0.6595\,\ii,0.6595+0.2551\,\ii)^{\T}$ and $(-0.2551-0.6595\,\ii,0.6595-0.2551\,\ii)^{\T}$, which are consistent with the results in~\cite{NQB14}. In fact, our convex relaxation approach is able to certify that the obtained eignevalue is globally the largest as long as the solution to~\eqref{eq:sdp} is rank-one. This certificate, however, cannot be seen from the solutions obtained in~\cite{NQB14}. Therefore, our experiment on Example~\ref{ex:pertube} helps to verify that the largest one among all the eigenvalues obtained in~\cite[Table 2]{NQB14} is actually the largest eigenvalue of $\Z$.

To conclude the numerical results, the convex relaxation methods proposed in Section~\ref{sec:method} and established based on the rank-one equivalence, are capable to find optimal solutions for the best rank-one approximation or the largest eigenvalue of CPS tensors. At least they are able to generate rank-one solutions (hence optimality) for the three types of instances discussed above. In case a rank-one solution fails to obtain, one may slightly perturb the original tensor, and the chance to obtain rank-one solutions may increase. This is one of the research topic to look into further.

We end this paper with a short concluding remark. CPS tensors, formally proposed not long ago and appeared quite often in applications, are showing growing importance. Apart from the findings on ranks, decompositions and approximations, there are also many interesting new phenomena on the numerical range and unitary decompositions, which are under our research agenda.


\end{document}